\newtheorem{lemma}{Lemma}
\newtheorem{theorem}{Theorem}
\newcommand {\E} {\mathbb{E}}
\DeclareMathOperator {\lcm}{lcm}
\newcommand {\p} {\mathbb{P}}
\newcommand {\N} {\mathbb{N}}
\newcommand {\R} {\mathbb{R}}
\newcommand {\ve} {\varepsilon}
\def\blfootnote{\xdef\@thefnmark{}\@footnotetext}\makeatother
\title[Series of dilated functions and spectral norms of GCD matrices]{\bf Convergence of  series of dilated functions \\ and spectral norms of GCD matrices}
\author[C. Aistleitner]{Christoph Aistleitner}
\address{Department of Mathematics, Graduate School of Science, Kobe University, Kobe 657-8501, Japan}
\email{aistleitner@math.tugraz.at}
\author[I. Berkes]{Istv\'an Berkes}
\address{Institute of Statistics, TU Graz, Kopernikusgasse 24/III, 8010 Graz, Austria}
\email{berkes@tugraz.at}
\author[K. Seip]{Kristian Seip}
\address{Department of Mathematical Sciences, Norwegian University of
Science and Technology (NTNU), NO-7491 Trondheim, Norway}
\email{seip@math.ntnu.no}
\author[M. Weber]{Michel Weber}
\address{IRMA, 10 rue du G\'en\'eral Zimmer,
67084 Strasbourg Cedex, France}
\email{michel.weber@math.unistra.fr}
\thanks{The first author is supported by a Schr\"odinger scholarship
of the Austrian Research Foundation (FWF). The second author is
supported by FWF Grant P 24302-N18 and OTKA grant K 108615. The
third author is supported by the Research Council of Norway grant
227768. This paper was initiated while three of the authors
(Berkes, Seip, Weber) participated in the research program
\emph{Operator Related Function Theory and Time-Frequency
Analysis} at the Centre for Advanced Study at the Norwegian
Academy of Science and Letters in Oslo during 2012--2013.}
\subjclass[2010]{42A16, 42A20, 42A61, 42B05, 11A05, 15A18, 26A45}
\begin{document}

\begin{abstract}
We establish a connection between the $L^2$
norm of sums of dilated functions whose $j$th Fourier coefficients are
 $\mathcal{O}(j^{-\alpha})$ for some $\alpha \in (1/2,1)$,
and the spectral norms of certain greatest common divisor (GCD)
matrices. Utilizing recent bounds for these spectral norms, we
obtain sharp conditions for the convergence in $L^2$ and for the
almost everywhere convergence of series of dilated functions.
\end{abstract}

\date{}
\maketitle

\section{Introduction}

Carleson's theorem~\cite{carleson}  states that the
series
\begin{equation} \label{sincos}
\sum_{k=1}^\infty c_k \sin 2 \pi k x \qquad \textup{and} \qquad \sum_{k=1}^\infty c_k \cos 2 \pi k x
\end{equation}
are convergent for almost every $x$ in $[0,1]$ provided that the sequence of coefficients $(c_k)_{k \geq 1}$ (assumed to be real)  satisfies
\begin{equation} \label{sumck}
\sum_{k=1}^\infty c_k^2 < \infty.
\end{equation}
By orthogonality, condition~\eqref{sumck} is also necessary and
sufficient for  the $L^2$ norm convergence of the two series
in~\eqref{sincos}. A much studied problem is what happens with the convergence in either sense if the functions $\sin 2 \pi x$ and
$\cos 2 \pi x$ are replaced by more general periodic functions.
More precisely, the question is what we can say about the
convergence of  the series
\begin{equation}\label{fkx} 
\sum_{k=1}^\infty c_k f (kx) 
\end{equation}
when $f: \mathbb{R}\to\mathbb{R}$ is a measurable function
satisfying
\begin{equation}\label{f}
f(x+1)=f(x), \qquad \int_0^1 f(x)~dx=0, \qquad \int_0^1
f^2(x)~dx<\infty.
\end{equation}

In general,~\eqref{sumck} will not be a sufficient condition
either for convergence in $L^2$ or for almost everywhere
convergence of~\eqref{fkx}, and the problem is to find alternate
conditions on the coefficients $(c_k)_{k \geq 1}$  when $f$
belongs to a prescribed class of functions. For a survey of
existing results in this direction and recent
results we refer to~\cite{abs, bewe2}. For a recent survey on Carleson's theorem, see~\cite{lacey}.\\

In this paper, we will be interested in the case when $f$ belongs
to the class $C_\alpha$ for  $\alpha>1/2 $, i.e.\ when the Fourier
series of $f$ is of the form
$$
\sum_{j=1}^\infty \left(a_j \sin 2 \pi j x + b_j \cos 2 \pi j x\right)
$$
with
$$
a_j=\mathcal{O}\left(j^{-\alpha}\right), \quad b_j=\mathcal{O}
\left(j^{-\alpha}\right), \qquad \text{as} \ j \to \infty.
$$
The important limiting case $\alpha=1$ is essentially covered by
the results  of~\cite{abs} (see Section~\ref{background} for
details). We will now extend the methods of~\cite{abs} to cover
also the range $1/2<\alpha<1$ and will give sharp conditions for
the $L^2$ convergence and the almost everywhere convergence
of~\eqref{fkx} as well as of the related series
\begin{equation} \label{fnkx}
\sum_{k=1}^\infty c_k f(n_k x),
\end{equation}
where $(n_k)_{k \geq 1}$ is a sequence of distinct positive integers. \\

Problems concerning the convergence of~\eqref{fkx} or~\eqref{fnkx}
can be traced back  to Riemann's \emph{Habilitationsschrift}
(1852). They exhibit profound interrelations between various parts
of analysis and number theory, as illustrated by the following
list of important contributions:  classical formulas of Franel and
Landau connecting the convergence theory of~\eqref{fkx}
and~\eqref{fnkx} to sums of greatest common divisors (GCD sums);
their generalization to the Hurwitz zeta  function due to
Mikol\'as; the work of Koksma, Erd\H os, G\'al, LeVeque, and
others in Diophantine approximation and uniform distribution
theory; the results of Dyer and Harman in the context of the
Duffin--Schaeffer conjecture in metric Diophantine approximation;
upper and lower bounds for GCD sums obtained by the authors of the
present paper; and problems concerning the magnitude of the
largest eigenvalue of GCD matrices, which were studied by Wintner,
by Lindqvist and Seip (in the context of questions about Riesz
bases), and by Hilberdink (in the context of the Riemann zeta
function). Basic work on the convergence and divergence of dilated
series and their relation to lacunary series was done by
Gaposhkin, Nikishin, Philipp, and Kaufman, just to mention a few.\\

In view of this multitude of connections, we have found it
appropriate to give a fairly detailed presentation  of those ideas
and lines of research that are most relevant for our particular
problem.  To this end, following the statement of our three main
theorems in the next section, Section~\ref{background} gives an
extensive survey of relevant background material.
Section~\ref{aux} contains auxiliary results, and the proofs are
given in Section~\ref{clambdap}.

\section{Results} \label{sec2}


Throughout this paper we write $K,\hat{K},K_1,K_2,\dots$ for
appropriate positive  constants, not always the same, which only
depend (at most) on $\alpha$ and $f$. We will use the Vinogradov
symbols ``$\ll$'' and ``$\gg$'' in the same sense. Throughout this
paper, we assume that $(c_k)_{1 \leq k \leq N}$ and $(c_k)_{k \geq
1}$ denote sequences of real numbers and that $(n_k)_{1 \leq k
\leq N}$ and $(n_k)_{k \geq 1}$ denote sequences of distinct
positive integers. For notational convenience, throughout this
paper we will read $\log x$ as $\max~ \{1,\log x\}$; in particular,
this implies that iterated logarithms are defined and non-zero.

\begin{theorem} \label{th1}
Assume that $f \in C_\alpha$ for some $\alpha \in (1/2,1)$. Then
the series~\eqref{fkx} is  convergent in $L^2$ norm and almost
everywhere provided
\begin{equation} \label{th1cond}
\sum_{k=1}^\infty c_k^2 \exp \left( \frac{K (\log k)^{1-\alpha}}{\log \log k} \right) < \infty, \qquad \textrm{where \ $K = 3/(1-\alpha) + 4/\sqrt{2\alpha-1}$}.
\end{equation}
Conversely, for every $\alpha \in (1/2,1)$ there exist a function
$f \in C_\alpha$ and a sequence $(c_k)_{k \geq 1}$  such
that~\eqref{th1cond} holds with $K$ replaced by
$(1-\ve)/(1-\alpha)$ for any $0<\varepsilon<1$, but the
series~\eqref{fkx} is not convergent in $L^2$.
\end{theorem}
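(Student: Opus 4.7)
The plan is to reduce the problem to a spectral norm bound on a GCD matrix, in keeping with the theme of the paper. Expanding $f$ in Fourier series and using orthogonality of $\{\sin 2\pi jx,\cos 2\pi jx\}$ gives
\[
\int_0^1 f(kx)f(mx)\,dx \;=\; \tfrac{1}{2}\sum_{jk=j'm}(a_j a_{j'}+b_j b_{j'}),
\]
and writing $d=\gcd(k,m)$, $k=du$, $m=dv$ with $\gcd(u,v)=1$, the right side collapses to $\tfrac{1}{2}\sum_{t\ge 1}(a_{ut}a_{vt}+b_{ut}b_{vt})$. Using $|a_j|,|b_j|\ll j^{-\alpha}$ and the summability of $\sum t^{-2\alpha}$ (here the hypothesis $\alpha>1/2$ enters), the modulus is bounded by a constant times $(uv)^{-\alpha}=\gcd(k,m)^{2\alpha}/(km)^{\alpha}$. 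Hence, for any finite $\mathcal{I}\subset\mathbb{N}$,
\[
\Bigl\|\sum_{k\in\mathcal{I}}c_k f(kx)\Bigr\|_2^{2} \;\ll\; \bigl\|\Gamma_\alpha(\mathcal{I})\bigr\|\cdot\sum_{k\in\mathcal{I}}c_k^2,
\]
where $\Gamma_\alpha(\mathcal{I})$ is the GCD matrix with entries $\gcd(k,m)^{2\alpha}/(km)^{\alpha}$, $k,m\in\mathcal{I}$, and $\|\cdot\|$ is the $\ell^2$ operator norm.

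I would then split $\mathbb{N}$ into dyadic blocks $B_n=\{k:2^n\le k<2^{n+1}\}$ and set $S_n(x)=\sum_{k\in B_n}c_k f(kx)$. The recent sharp bound for the spectral norm of $\Gamma_\alpha$, which is the central ingredient alluded to in the abstract, produces
\[
\|S_n\|_2^{2}\;\ll\;\exp\!\Bigl(\tfrac{K_1(\log 2^n)^{1-\alpha}}{\log\log 2^n}\Bigr)\sum_{k\in B_n}c_k^2
\]
with an explicit constant $K_1$ reflecting the $3/(1-\alpha)$ summand in $K$. Summing over $n$, hypothesis~\eqref{th1cond} forces $\sum_n\|S_n\|_2$ to converge, which yields $L^2$-convergence of the full series. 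For almost everywhere convergence, I would combine this with a Rademacher--Menshov-type maximal inequality applied to the partial sums inside each block, replacing $\|S_n\|_2$ by $\bigl\|\max_N\bigl|\sum_{k\in B_n,\,k\le N}c_k f(kx)\bigr|\bigr\|_2$ at the cost of a factor of order $\log|B_n|\ll n$; absorbing this logarithmic loss into the exponential weight accounts for the second summand $4/\sqrt{2\alpha-1}$ in $K$, after which a Chebyshev/Borel--Cantelli argument delivers the almost-everywhere statement.

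For the converse I would test against the concrete function $f(x)=\sum_{j\ge1}j^{-\alpha}\sin 2\pi jx$, which plainly lies in $C_\alpha$ and is chosen so that the bilinear form $\int f(kx)f(mx)\,dx$ is not only \emph{bounded by} but \emph{comparable to} $\gcd(k,m)^{2\alpha}/(km)^{\alpha}$. The matching lower bound for the largest Rayleigh quotient of $\Gamma_\alpha$, of size $\exp\bigl((1-\delta)(\log N)^{1-\alpha}/((1-\alpha)\log\log N)\bigr)$ and realized by an explicit extremal set $(n_k)$ of smooth integers with specific eigenvector entries, then transfers to a lower bound on $\|\sum_k c_{n_k}f(n_k x)\|_2^{2}$. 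Embedding such extremal configurations into successive disjoint dyadic blocks produces a coefficient sequence which satisfies~\eqref{th1cond} with $K$ replaced by any constant strictly smaller than $1/(1-\alpha)$ while making the $L^2$ norms of partial sums diverge.

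The main obstacle is the spectral norm estimate for $\Gamma_\alpha$ when $1/2<\alpha<1$. Unlike the G\'al-type logarithmic bounds that govern the boundary case $\alpha=1$ treated in~\cite{abs}, in this range one must employ the more delicate recent GCD matrix bounds, and keeping track of the constants through the dyadic decomposition and the maximal-inequality step is what determines the precise value of $K$; the calibration of this constant, together with verifying that the extremal sequence coming from the GCD lower bound genuinely realizes the weighted $\ell^2$ condition with constant $(1-\varepsilon)/(1-\alpha)$, is where most of the work will be concentrated.
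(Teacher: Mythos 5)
Your proposal is correct and follows essentially the same route as the paper: the covariance bound $\int_0^1 f(kx)f(mx)\,dx \ll \gcd(k,m)^{2\alpha}/(km)^\alpha$, a blockwise application of Hilberdink's spectral-norm bound for the GCD matrix (the paper uses blocks $(e^m,e^{m+1}]$ rather than dyadic ones), a Rademacher--Menshov maximal inequality for the almost everywhere statement, and, for the converse, the extremal function $f_\alpha$ tested against sets of smooth integers $2^{2i}p_1^{w_1}\cdots p_i^{w_i}$ placed in disjoint blocks. The only quibble is attribution of constants: in the paper the summand $4/\sqrt{2\alpha-1}$ in $K$ already comes from Hilberdink's eigenvalue estimate itself, while the logarithmic loss from Rademacher--Menshov is absorbed by a slight increase of the constants rather than accounting for that term.
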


\begin{theorem} \label{th2}
Assume that $f \in C_\alpha$ for some $\alpha \in (1/2,1)$. Then
the series~\eqref{fnkx}   is convergent in $L^2$ norm and almost
everywhere if
\begin{equation} \label{th2cond}
\sum_{k=1}^\infty c_k^2  \exp\left( \frac{K (\log k)^{1-\alpha}}{(\log \log k)^{\alpha}} \right) < \infty, \qquad \textrm{where \ $K = 6/(1-\alpha) +
7\left(|\log(2\alpha-1)|^{1/2}+1\right)$}.
\end{equation}
Conversely, for every $\alpha \in (1/2,1)$ there exist a function $f \in C_\alpha$, a sequence $(c_k)_{k \geq 1}$, a sequence $(n_k)_{k \geq 1}$, and a constant $\hat{K}=\hat{K}(\alpha)$ such that~\eqref{th2cond} holds with $K$ replaced by $\hat{K}$, but the series~\eqref{fnkx} is not convergent in $L^2$ norm and is divergent almost everywhere.
\end{theorem}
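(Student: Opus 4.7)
The plan is to reduce both the convergence and the divergence halves to the GCD-matrix machinery announced in the introduction. Expanding $f(x)=\sum_{j\ge 1}(a_j\sin 2\pi j x + b_j\cos 2\pi j x)$, orthogonality gives
$$\Bigl\|\sum_{k=M}^{N} c_k f(n_k x)\Bigr\|_2^2 = \tfrac{1}{2}\sum_{k,l=M}^{N} c_k c_l \!\!\sum_{i n_k = j n_l}\!\!(a_i a_j + b_i b_j),$$
and the inner sum is $\mathcal{O}\bigl(\gcd(n_k,n_l)^{2\alpha}/(n_k n_l)^\alpha\bigr)$ since $|a_j|,|b_j|\ll j^{-\alpha}$. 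Thus the $L^2$ norm of any partial sum is controlled by the quadratic form associated with the GCD matrix $\Gamma(n_1,\dots,n_N)=\bigl(\gcd(n_k,n_l)^{2\alpha}/(n_k n_l)^\alpha\bigr)_{k,l}$, and the task is to bound the spectral norm of this matrix on suitable blocks of indices.

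For the positive assertion, I would partition the index set into dyadic blocks $I_m=\{k:2^m\le k<2^{m+1}\}$ and apply the spectral-norm bound for GCD matrices — the key analytic input, due to the authors and alluded to in the list of contributions — which yields $\|\Gamma(\{n_k:k\in I_m\})\|\le \exp\bigl(C(\log|I_m|)^{1-\alpha}/(\log\log|I_m|)^{\alpha}\bigr)$ with $C$ just above $6/(1-\alpha)$. This gives the block estimate
$$\Bigl\|\sum_{k\in I_m} c_k f(n_k x)\Bigr\|_2^2 \ll \exp\!\Bigl(\tfrac{C(\log 2^m)^{1-\alpha}}{(\log\log 2^m)^{\alpha}}\Bigr)\sum_{k\in I_m}c_k^2,$$
and summing over $m$ the hypothesis \eqref{th2cond} swallows the exponential weight with room to spare (the surplus in the constant $K$ being chosen precisely for this purpose).

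To upgrade to almost everywhere convergence, I would apply a Rademacher--Menshov-type maximal inequality block-by-block; the extra $(\log|I_m|)^{2}$ factor it introduces is absorbed by the additive summand $7(|\log(2\alpha-1)|^{1/2}+1)$ in $K$, which is dimensioned to handle the Carleson-type constants that deteriorate as $\alpha\downarrow 1/2$. Almost everywhere convergence within each block follows from Carleson's theorem applied to the finite trigonometric polynomial $\sum_{k\in I_m} c_k f(n_k x)$, and the maximal square function over the blocks is summable by the same Plancherel/GCD argument as above.

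The main obstacle, and the delicate part of the paper, is the converse. We must exhibit $(n_k)$ and $(c_k)$ that saturate the spectral-norm bound. The natural candidate is to take $n_k$ to run through the friable integers with prime factors below a threshold $p_M$, so that $\Gamma$ factorizes over primes and its top eigenvalue can be computed via an Euler product; choosing $M=M(N,\alpha)$ optimally produces $\|\Gamma\|\ge \exp\bigl(\hat{c}(\log N)^{1-\alpha}/(\log\log N)^{\alpha}\bigr)$. Taking $(c_k)$ as the extremal eigenvector, $f$ with Fourier coefficients of exact order $j^{-\alpha}$ with appropriately chosen signs, and distributing mass over disjoint dyadic blocks, gives divergence of the $L^2$ norms along a subsequence, whence divergence in $L^2$. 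The final step — upgrading $L^2$ divergence to a.e.\ divergence — I would accomplish by randomizing the signs of $c_k$ within each block and invoking a Paley--Zygmund / Borel--Cantelli argument on the disjoint blocks, in the spirit of \cite{bewe2}, to realise a single deterministic instance with divergence on a set of full measure.
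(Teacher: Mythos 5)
Your convergence half follows the paper's route: block the index set (the paper uses $e$-adic rather than dyadic blocks, which is immaterial), apply the spectral-norm bound for $H_N^{(\alpha)}$ on each block, and absorb the Rademacher--Menshov logarithm into the exponential weight; the passage to a.e.\ convergence is then a Cauchy-type criterion for the maximal function (Lemma~\ref{lemma4}). Two small corrections there: Carleson's theorem plays no role, and the summand $7(|\log(2\alpha-1)|^{1/2}+1)$ in $K$ comes from the GCD-sum estimates of~\cite{BonS}, not from any Carleson constant. Your $L^2$-divergence construction (friable squarefree blocks, Euler-product evaluation of the quadratic form, nonnegative near-extremal coefficients, positivity of the Fourier coefficients of $f_\alpha$ to discard cross-block terms) is also essentially the paper's.

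The genuine gap is in the upgrade to almost everywhere divergence. Randomizing the signs of $c_k$ within a block destroys exactly the quantity you need: after averaging over signs the quadratic form collapses to its diagonal, $\E_{\pm}\bigl\|\sum_k \pm c_k f_\alpha(n_k x)\bigr\|^2 \asymp \sum_k c_k^2$, so the exponential blow-up coming from the top eigenvector is lost and no divergence can be extracted from a typical choice of signs. Moreover, even with the deterministic nonnegative coefficients, each individual block sum has squared $L^2$ norm of size $i^{-1}(\log i)^{-2}$ times a subpolynomial factor, hence tends to $0$; a Paley--Zygmund/Borel--Cantelli argument block by block therefore cannot produce a divergent $\limsup$ --- what diverges is only the cumulative variance $B_M=\sum_{i\le M}\|\cdot\|^2$. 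The paper's solution is structural: every $n_k$ in the $i$th block is forced to be divisible by $2^{S_i}$, with $S_{i+1}\ge T_i+\eta\log_2 i$, so that conditional expectations of the block sums onto nested dyadic $\sigma$-fields produce genuinely \emph{independent} variables $Y_i$ with $\|X_i-Y_i\|\ll i^{-2}\,\#\Delta_i$ (Lemma~\ref{lemmaber}); one then applies Lyapunov's central limit theorem to $\sum_i d_iY_i$, which requires the moment bound $f_\alpha\in L^{2+\delta}$ for some $2+\delta<(1-\alpha)^{-1}$ (this is where \eqref{falphalp} and the choice $\beta<\delta/(2+\delta)$ enter), and finally transfers the conclusion back to $\sum_k c_kf_\alpha(n_kx)$ via Borel--Cantelli. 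None of these ingredients --- the divisibility/independence device, the $L^{2+\delta}$ integrability of $f_\alpha$, the CLT --- appears in your outline, and without them the a.e.\ divergence statement is not reached.
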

Theorem~\ref{th1} improves results of Br\'{e}mont~\cite{bremont},
who proved that~\eqref{fkx} is convergent in $L^2$ norm and almost
everywhere provided
$$
\sum_{k=1}^\infty c_k^2 \exp \left( \frac{(1+\ve)(\log k)^{2 - 2 \alpha}}{2(1-\alpha) \log \log k}\right) < \infty \qquad \textrm{for some $\ve>0$.}
$$
Br\'{e}mont also proved that there exists a sequence $(c_k)_{k \geq 1}$ satisfying~\eqref{sumck} such that the series~\eqref{fkx} does not converge in $L^2$ norm and is almost everywhere divergent. \\

As the second part of Theorem~\ref{th2} shows,
condition~\eqref{th2cond} is optimal both for convergence in $L^2$
and almost everywhere convergence, except for the precise value of
the constant, thus providing a nearly complete solution of the
problem of norm convergence and almost everywhere convergence of
series of the form~\eqref{fnkx}. In Theorem~\ref{th1},  we claim
the optimality  of condition~\eqref{th1cond} only for the norm
convergence of~\eqref{fkx}; we do not know whether~\eqref{th1cond}
is optimal also for almost everywhere convergence. However, we
know that, in general, condition~\eqref{sumck} is not sufficient
for the almost everywhere convergence of the series~\eqref{fkx}.
This follows from our proof of the optimality of the convergence
condition in Theorem~\ref{th2} for almost everywhere convergence
of~\eqref{fnkx}. In fact, for the proof of the optimality of
Theorem~\ref{th2} for given $\alpha \in (1/2,1)$ and an
appropriate function $f \in C_\alpha$, we construct sequences
$(c_k)_{k \geq 1}$ and $(n_k)_{k \geq 1}$ such that
condition~\eqref{th2cond} holds for a certain value of $K$, but
the series~\eqref{fnkx} is almost everywhere divergent. The proof
reveals that $n_k$ is of asymptotic order at most $R^{k \log k}$
for some constant $R=R(\alpha)$. Consequently, setting $d_{n_k} =
c_{k}$ when $n=n_k$  and $d_n=0$ otherwise, we see that $\sum_{n=1}^\infty d_n f(n x)$ is divergent almost everywhere, but
\begin{eqnarray*}
\sum_{n=1}^\infty d_n \exp\left( \frac{\hat{K} (\log \log n)^{1-\alpha}}{(\log \log \log n)^{\alpha}} \right) & = & \sum_{k=1}^\infty d_{n_k} \exp\left( \frac{\hat{K} (\log \log n_k)^{1-\alpha}}{(\log \log \log n_k)^{\alpha}} \right) \\
& \leq & \sum_{k=1}^\infty c_k  \exp\left( \frac{K (\log k)^{1-\alpha}}{(\log \log k)^{\alpha}} \right) \quad < \quad \infty
\end{eqnarray*}
for some (sufficiently small) positive constant $\hat{K}$. Hence, in the condition for almost everywhere convergence in Theorem~\ref{th1}, a Weyl factor of order at least
$$
\exp\left( \frac{\hat{K} (\log \log k)^{1-\alpha}}{(\log \log \log k)^{\alpha}} \right)
$$
is necessary. This leaves a rather large gap in comparison to the Weyl factor in~\eqref{th1cond}.\\


As noted, Theorem~\ref{th1} gives an optimal condition for the
problem of $L^2$ convergence of series of the form~\eqref{fkx}.
More precisely, this statement is true as long as one requests the
Weyl multiplier to be a ``simple'', slowly varying function. On
the other hand, the situation is totally different if one allows
the Weyl mutiplier $\psi(k)$ to depend on number-theoretic
properties of $k$ and to be strongly fluctuating as $k$ increases.
In this sense, Theorem~\ref{th1} may be said to conceal the
arithmetical nature of our problem. To state the next result, we
introduce the divisor function
\begin{equation*} \label{sigmasd}
\sigma_{s} (k)=\sum_{d|k} d^{s}.
\end{equation*}
We will prove the following result.

\begin{theorem} \label{th3} Assume that $f \in C_\alpha$ for some $\alpha \in (1/2,1)$. Assume also that
\begin{equation}\label{sigma1}
\sum_{k=1}^\infty c_k^2 \sigma_{1-2\alpha+\ve} (k) <\infty
\end{equation}
for some $\ve>0$. Then~\eqref{fkx} is convergent in $L^2$. On the other hand, for every $\alpha \in (1/2,1)$ and every
$0<\beta<1$ there exist a function $f \in C_\alpha$ and a real sequence $(c_k)_{k \geq 1}$  such that
\begin{equation}\label{sigma2}
\sum_{k=1}^\infty c_k^2 \sigma_{-\alpha} (k)^{\beta} <\infty,
\end{equation}
but~\eqref{fkx} is not convergent in $L^2$.
\end{theorem}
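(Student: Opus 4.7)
\emph{Positive direction.} My plan is to bound $\|\sum_{k=M}^{N}c_k f(kx)\|_2^2$ first by a GCD bilinear form, then by a weighted $\ell^2$ sum. Expanding the Fourier series of $f$ and applying Parseval, only frequencies with $jk=j'k'$ contribute to $\int f(kx)f(k'x)\,dx$; writing $k=da$, $k'=db$ with $\gcd(a,b)=1$ forces $(j,j')=(bt,at)$, and summing the decay $|a_j|,|b_j|\ll j^{-\alpha}$ over $t$ yields, after the triangle inequality,
\[
\Bigl\|\sum_{k=M}^{N}c_k f(kx)\Bigr\|_2^2\ \ll\ \sum_{M\le k,k'\le N}|c_kc_{k'}|\,\frac{\gcd(k,k')^{2\alpha}}{(kk')^\alpha}.
\]
I then expand $\gcd(k,k')^{2\alpha}=\sum_{d\mid\gcd(k,k')}J_{2\alpha}(d)$ via the Jordan totient $J_{2\alpha}=\mu\ast\mathrm{id}^{2\alpha}$, which satisfies $0\le J_{2\alpha}(d)\le d^{2\alpha}$, rewriting the right-hand side as $\sum_d J_{2\alpha}(d)\bigl(\sum_{d\mid k}|c_k|/k^\alpha\bigr)^2$. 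Applying Cauchy--Schwarz to the inner sum with weight $m^{-(1+\ve)}$, where $m=k/d$, majorizes each square by $\zeta(1+\ve)\sum_m c_{dm}^2\, m^{1+\ve}/(dm)^{2\alpha}$; the estimate $J_{2\alpha}(d)\le d^{2\alpha}$ then absorbs the denominator $d^{2\alpha}$, and after exchanging summation orders and setting $k=dm$, the remaining divisor sum is exactly $\sigma_{1-2\alpha+\ve}(k)$. This yields
\[
\Bigl\|\sum_{k=M}^{N}c_k f(kx)\Bigr\|_2^2\ \ll\ \sum_{k=M}^{N}c_k^2\,\sigma_{1-2\alpha+\ve}(k),
\]
and~\eqref{sigma1} turns this into Cauchyness, giving $L^2$ convergence.

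\emph{Counterexample.} I would fix $f(x)=\sum_{j\ge1}j^{-\alpha}\sin 2\pi jx\in C_\alpha$, for which the inequality above becomes an equality up to the constant $\zeta(2\alpha)/2$, and build $(c_k)$ in disjoint blocks. For each $m\ge1$ I choose a prime window $[y_m,y_m']$ with $y_m'<y_{m+1}$ and $y_m\to\infty$, take $\mathcal{A}_m$ to be a suitable finite set of squarefree integers with prime factors in that window, and pick nonnegative coefficients $(c_k^{(m)})_{k\in\mathcal{A}_m}$ with
\[
\sum_{k\in\mathcal{A}_m}(c_k^{(m)})^2\,\sigma_{-\alpha}(k)^\beta\le 2^{-m}\quad\text{and}\quad \sum_{k,k'\in\mathcal{A}_m}c_k^{(m)}c_{k'}^{(m)}\,\frac{\gcd(k,k')^{2\alpha}}{(kk')^\alpha}\ge 1.
\]
Setting $c_k=c_k^{(m)}$ for $k\in\mathcal{A}_m$ (ordered so that $\max\mathcal{A}_m<\min\mathcal{A}_{m+1}$) and $c_k=0$ otherwise, the partial-sum increment over block $m$ has $L^2$-norm bounded below uniformly in $m$, so $(S_N)$ fails to be Cauchy in $L^2$; meanwhile $\sum c_k^2\sigma_{-\alpha}(k)^\beta\le\sum 2^{-m}<\infty$, verifying~\eqref{sigma2}.

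\emph{Main obstacle.} The construction rests on the generalized spectral ratio estimate
\[
\sup_{\mathcal{A}\text{ finite}}\,\sup_{c\ne0}\,\frac{\sum_{k,k'\in\mathcal{A}}c_kc_{k'}\gcd(k,k')^{2\alpha}/(kk')^\alpha}{\sum_{k\in\mathcal{A}}c_k^2\,\sigma_{-\alpha}(k)^\beta}=+\infty\qquad(0<\beta<1),
\]
and here the strict inequality $\beta<1$ is decisive. For $\mathcal{A}$ made of squarefree integers with prime factors $\le y$, Mertens' formula $\sum_{p\le y}p^{-\alpha}\sim y^{1-\alpha}/((1-\alpha)\log y)$ gives $\sigma_{-\alpha}(k)^\beta\le\exp(\beta c_\alpha y^{1-\alpha}/\log y)$, whereas on G\'al-type extremal sets the spectral norm of the GCD matrix reaches order $\exp(c_\alpha y^{1-\alpha}/\log y)$, by GCD-matrix lower bounds from the same circle of ideas that underlies this paper. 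Plugging the top eigenvector of the GCD matrix into the above ratio yields a lower bound of order $\exp((1-\beta)c_\alpha y^{1-\alpha}/\log y)\to\infty$ as $y\to\infty$, precisely thanks to $\beta<1$; this comparison of the Mertens growth of $\sigma_{-\alpha}^\beta$ with the near-maximal spectral growth of the GCD matrix is the delicate arithmetic step and the main technical hurdle of the argument.
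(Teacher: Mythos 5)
Your proof is correct, and it is worth recording where it coincides with and where it departs from the paper's argument. For the sufficiency part your route is essentially the paper's: the paper first passes to the GCD bilinear form via~\eqref{normest} and then invokes Lemma~\ref{lemmahil} (Hilberdink's transformation $b_k=k^{-\alpha}\sum_{d\mid k}d^{\alpha}|c_d|$) before running the same Cauchy--Schwarz with exponents $\tfrac12+\tfrac\ve2$ and $-\alpha+\tfrac12+\tfrac\ve2$ that you run; your Jordan-totient decomposition $\gcd(k,k')^{2\alpha}=\sum_{d\mid\gcd(k,k')}J_{2\alpha}(d)$ with $0\le J_{2\alpha}(d)\le d^{2\alpha}$ is in effect an inline proof of that lemma, and it even gives the slightly cleaner range $k\le N$ in place of the paper's $k\le N^2$ (which is harmless either way for the Cauchy criterion). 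For the counterexample the two arguments diverge in presentation but not in substance: the paper does not build anything new, it simply takes the sequence produced by the optimality part of Theorem~\ref{th1} with constant $\beta(1+\delta)/(1-\alpha)$, $\beta(1+\delta)<1$, and observes via Gronwall's bound~\eqref{grw} that $\sigma_{-\alpha}(k)^{\beta}\le\exp\bigl(\beta(1+o(1))(1-\alpha)^{-1}(\log k)^{1-\alpha}/\log\log k\bigr)$ is eventually dominated by that Weyl factor, so~\eqref{sigma2} follows for free. You instead rebuild the block construction directly (squarefree smooth numbers in disjoint prime windows, uniform coefficients per block, the identity~\eqref{struct} giving the quadratic form $\prod(1+p^{-\alpha})$) and tune it against the weight $\sigma_{-\alpha}^{\beta}$; your key observation that the quadratic form and $\max_k\sigma_{-\alpha}(k)$ on such a block are \emph{literally the same product} $\prod(1+p^{-\alpha})$, so the ratio is $\prod(1+p^{-\alpha})^{1-\beta}\to\infty$, is exactly the constant-matching that the paper obtains by quoting Gronwall's extremal sequence $p_1\cdots p_r$. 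What the paper's route buys is brevity and reuse of Theorem~\ref{th1}; what yours buys is a self-contained argument that makes the role of $\beta<1$ transparent without passing through the exponential Weyl factor. Two small points to tidy in a written version: make the blocks genuinely disjoint and increasing (e.g.\ drop the empty product and separate the prime windows sufficiently, or multiply by distinct powers of $2$ as the paper does in the optimality proof of Theorem~\ref{th1}), and note that the lower bound on the block increments uses the exact identity of Lemma~\ref{lemmaalpha} for $f_\alpha$ together with the nonnegativity of the $c_k$, so that failure of the Cauchy property indeed forces $L^2$ divergence.
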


\bigskip
In Berkes and Weber~\cite{bewe} it is proved that
\begin{equation}\label{sigma1b}
\sum_{k=1}^\infty c_k^2 \sigma_{1-2\alpha} (k) (\log k)^2<\infty
\end{equation}
implies the convergence in $L^2$ norm and almost everywhere convergence of~\eqref{fkx}. Despite the similarity of~\eqref{sigma1} and~\eqref{sigma1b}, there is a
crucial difference between the corresponding convergence statements.
Clearly, for every $s>0$ we have
$$
\sum_{k=1}^n \sigma_{-s}(k)=\sum_{k=1}^n \sum_{d|k} d^{-s}=\sum_{d=1}^\infty \left \lfloor \frac{n}{d}\right\rfloor d^{-s}\sim n \sum_{d=1}^\infty d^{-1-s} \qquad \text{as} \ n\to\infty,
$$
showing that the average value of the function $\sigma_{-s}(k)$ is
$\sum_{d=1}^\infty d^{-1-s}<\infty$. This implies that given any
function $\omega (k)\to \infty$, the asymptotic density of the set
$\{k: \sigma_{-s}(k)\le \omega (k) \}$ is $1$  and thus for
$\alpha>1/2$ and sufficiently small $\ve>0$, the Weyl factor
$\sigma_{1-2\alpha+\ve} (k)$ in~\eqref{sigma1} is of order
$\mathcal{O}(\omega(k))$ for ``most'' $k$. Thus, despite the
optimality of the condition
\begin{equation*}
\sum_{k=1}^\infty c_k^2 \exp \left( \frac{K (\log k)^{1-\alpha}}{\log \log k} \right) < \infty
\end{equation*}
in Theorem~\ref{th1}, for most $k$ the much smaller Weyl factor
$\omega (k)$ suffices for the norm convergence of
$\sum_{k=1}^\infty c_k f(kx)$.
This effect will be apparent from the proofs of the divergence
results in Theorems~\ref{th1}--\ref{th3}. The construction of
$(c_k)_{k \geq 1}$ and $(n_k)_{k \geq 1}$ in the 
examples of divergence uses, roughly speaking, the eigenvectors of suitable GCD
matrices belonging to the maximal eigenvalue, which, as is seen
from~\cite{abs} and~\cite{hilber}, are concentrated on indices $k$
with many small prime factors. These are also the indices $k$
where the divisor functions $\sigma_{-s} (k)$ are large: as
Gronwall~\cite{gron} showed,
\begin{equation} \label{grw}
\sigma_{-s} (k)\le \exp \left( \frac{1+o(1)}{1-s} \frac{(\log k)^{1-s}}{\log\log k}\right)
\end{equation}
and $\sigma_{-s} (k)$ reaches the order of magnitude on the right-hand side along the
sequence $k_r=p_1 \cdots p_r$, $r=1, 2, \ldots$, where $(p_r)_{r \ge 1}$ is the sequence of
primes. There is a gap between~\eqref{sigma1} and~\eqref{sigma2}, and the problem of finding the optimal arithmetic
function required for the $L^2$ norm convergence of~\eqref{fkx} remains open.\\

As mentioned in the introduction, the case $\alpha=1$ is
essentially covered by the results of~\cite{abs}. We refer here
to~\cite[Theorem~3]{abs}, concerning the almost everywhere
convergence of~\eqref{fnkx} for functions $f$ of bounded
variation. The only property used in the proof of that result is
that a function of bounded variation belongs to $C_1$. It
therefore follows from ~\cite[Theorem~3]{abs} that~\eqref{fnkx} is almost everywhere convergent when $f\in C_1$ provided
\begin{equation} \label{ckgamma}
\sum_{k=1}^\infty c_k^2 (\log \log k)^\gamma < \infty
\end{equation}
for some $\gamma > 4$ (under the additional assumption that
$(n_k)_{k \geq 1}$ is strictly increasing).
Moreover, it was proved in~\cite[Theorem~7]{abs} that this statement becomes
false for $\gamma<2$. Since the series~\eqref{fkx} is a special case of~\eqref{fnkx}, the series~\eqref{fkx} is also almost everywhere convergent for all $f\in C_1$ if~\eqref{ckgamma} holds for some $\gamma>4$. Concerning $L^2$ convergence, using~\cite[Lemma~4]{abs} it can be shown that the series~\eqref{fnkx} is convergent in $L^2$ norm for all $f \in C_1$ provided~\eqref{ckgamma} holds for some $\gamma>4$, and by the results in~\cite{gal} this statement becomes false for $\gamma<2$. Moreover, using the results from~\cite{hilber} it is possible to show that that the series~\eqref{fkx} is convergent in $L^2$ norm for all $f \in C_1$ provided~\eqref{ckgamma} holds for some $\gamma>2$, and this statement also becomes false for $\gamma<2$. Thus the problem of $L^2$ convergence and almost everywhere convergence of the series~\eqref{fkx} and~\eqref{fnkx} is solved, up to powers of $(\log \log k)$ in the extra convergence conditions. The problem of norm and almost everywhere convergence of~\eqref{fkx} 
when~\eqref{f} is our only assumption on $f$, is considerably harder. The reason for the
difficulties is that while for $f \in C_\alpha$ we have
\begin{equation} \label{covar}
\left| \int_0^1 f(kx)f(\ell x) ~dx \right| \le K \frac{(\gcd(k,
\ell))^{2\alpha}}{(k\ell)^\alpha}, \qquad k, \ell \ge 1
\end{equation}
for some constant $K>0$, for general $f$ satisfying~\eqref{f} the
integral in \eqref{covar} depends on $k, \ell$ and the Fourier
coefficients of $f$ in a rather complicated way and the arithmetic
machinery involving GCD sums and eigenvalues of GCD matrices used
in the proof of our theorems breaks down. Assuming that the
complex Fourier coefficients $a_j$ of $f$ satisfy  $|a_j|\le
\phi(j)$, where the positive function $\phi$ has the homogeneity
property $|\phi(jk)|\ll k^{-\gamma} \phi(j)$ for some $\gamma>0$,
much of what is developed in the present paper will carry over to
this situation. Estimates as those found in~\cite{BonS} could
then, for instance, be used to obtain  fairly sharp
analogues of Theorem~\ref{th1} and Theorem~\ref{th2} for the considered function
classes. 
\\

 In case of arithmetic criteria like in Theorem 3, Berkes
 and Weber~\cite{bewe3} proved that if $f$ satisfies~\eqref{f} with complex Fourier coefficients $a_j$, then the series~\eqref{fkx} converges
almost everywhere provided
\begin{equation}\label{ccond}
\sum_{k=1}^\infty c_k^2 \psi(k) (\log k)^2<\infty,
\end{equation}
where the arithmetic function $\psi$ is defined by
\begin{equation}\label{conds}
\psi(k)=\sum_{d|k} (dg(d)+G(d)) \quad  \text{where} \quad  g(r)=
\sum_{j=1}^\infty |a_{jr}|^2, \quad G(r)=\sum_{j\le 2r}
g(j).
\end{equation} For example, if $|a_j| \le  K j^{-1/2} (\log
j)^{-\gamma}$, $\gamma
>1/2$, then $\psi (k)$ reduces to
\begin{equation}\label{conds2}
\psi (k) = \sum_{d|k} (\log d)^{-(2\gamma-1)}.
\end{equation}
Note that the arithmetic function $\psi$ in~(\ref{conds2}) is
larger than the one in~(\ref{sigma1}), which is of course to be
expected. Note also that if $j^{-\gamma} |a_j|$ is non-increasing
for some $\gamma>0$, then in~(\ref{ccond}) we can choose
$$\psi(k)=d(k)=\sum_{d|k} 1.$$
The same criterion holds if $f$ satisfies a H\"older continuity condition, see~\cite{bewe,weber}. These
remarks show again the strong arithmetic character of our
convergence problem. In~\cite{bewe3} it is also shown that except
the factor $(\log k)^2$, condition~(\ref{ccond}) is optimal.
However, just like in Theorem 3, the arithmetic criterion~(\ref{ccond}) is not as sharp as those in Theorems~\ref{th1} and~\ref{th2}. \\

Note that if~(\ref{fkx}) converges almost everywhere for $c_k=1/k$, then by
the Kronecker lemma we have
\begin{equation}\label{khconj}
\lim_{N\to\infty} \frac {1}{N} \sum_{k=1}^N f(kx)=0 \qquad
\textup{a.e.}
\end{equation}
and thus the almost everywhere convergence problem of~(\ref{fkx}) under
(\ref{f}) is closely connected with the classical problem of the
convergence of averages in~(\ref{khconj}).  Khinchin~\cite{khin}
conjectured that under~(\ref{f}) (even without the third
condition) the convergence relation~(\ref{khconj}) holds. This
conjecture was disproved nearly 50 years later by a famous
counterexample of Marstrand~\cite{mars}. In the positive
direction, Koksma~\cite{koks53} proved that~(\ref{khconj}) holds
provided the complex Fourier coefficients $a_j$ of $f$ satisfy
\begin{equation*}\label{kokcond}
\sum_{j=1}^\infty |a_j|^2 \sigma_{-1} (j)<\infty.
\end{equation*}
Bourgain~\cite{bo} gave a new, much simplified counterexample to
Khinchin's conjecture and claim\-ed, without proof, that Koksma's
criterion is essentially optimal. This claim was proved recently
by Berkes and Weber~\cite{bewe3}.  Thus while the almost everywhere
convergence problem for~(\ref{fkx}) under~(\ref{f}) remains open,
the closely related problem of almost everywhere convergence of averages
(\ref{khconj}) is essentially settled.

\section{The role of GCD matrices and certain extremal functions in $C_\alpha$}\label{background}

We will now review the key ideas used in both~\cite{abs} and the
present paper. We begin by introducing the special functions
$f_\alpha(x)$ and $\bar{f}_\alpha(x)$ in $C_\alpha$ defined by
\begin{equation} \label{flambda}
f_\alpha (x) = \sum_{j=1}^\infty \frac{\sin 2 \pi j x}{j^\alpha} \qquad \textrm{and} \qquad \bar{f}_\alpha(x) = \sum_{j=1}^\infty \frac{\cos 2 \pi j x}{j^\alpha}.
\end{equation}
Informally speaking, these functions are extremal in $C_\alpha$ in the sense that their Fourier coefficients are of maximal size.
Furthermore, all Fourier coefficients are positive, which makes it relatively easy to obtain lower bounds for $L^2$ norms of sums of dilated functions. \\

When $\alpha=1$, the first series in~\eqref{flambda} is the Fourier series of the function
$$
f_1(x) = \pi \left(1/2 - \{x\} \right),
$$
where $\{ \cdot \}$ denotes fractional part. This means that, up to multiplication by a constant, $f_1$ is the first Bernoulli polynomial on $[0,1]$, extended with period one. Convergence problems for ~\eqref{fkx} and~\eqref{fnkx} have been investigated extensively for $f=f_1$, starting probably with Riemann's \emph{Habilitationsschrift} of 1852. Such series have been called \emph{Davenport series} in honor of Harold Davenport, who was the first to study them in this general form~\cite{davenport}. See~\cite{jaffard} for a survey on the history of the subject and several results on the convergence problem for series involving this function. Convergence problems for Davenport series have an interesting connection with fractal geometry, see for example~\cite{jaffard2}.\\

The convergence problem for series involving the function $f_1$ is connected with sums involving greatest common divisors through the formula
\begin{equation} \label{franel}
\int_0^1 \left( \{kx\}-1/2\right) (\{\ell x\}-1/2) dx = \frac{1}{12} \frac{(\gcd(k,\ell))^2}{k \ell}
\end{equation}
for positive integers $k,\ell$, which was first stated by Franel and formally proved by Landau in 1924. Consequently we have
\begin{equation} \label{landau1}
\int_0^1 \left( \sum_{k=1}^N c_k f_1(n_k x) \right)^2 dx = \frac{\pi^2}{12} \sum_{k,\ell=1}^N c_k c_\ell \frac{(\gcd(n_k,n_\ell))^2}{n_k n_\ell}.
\end{equation}
But much more is true since the Fourier coefficients of $f_1$ are positive and maximal: By an observation of Koksma~\cite{koks} we have
\begin{equation} \label{landau2}
\int_0^1 \left( \sum_{k=1}^N c_k f(n_k x) \right)^2 dx \ll \sum_{k,\ell=1}^N |c_k c_\ell | \frac{(\gcd(n_k,n_\ell))^2}{n_k n_\ell}
\end{equation}
for every function $f$ in $C_1$. \\

The relation between $L^2$ norms of sums of dilated functions and
sums involving greatest common  divisors extends to the classes
$C_\alpha$ for $1/2<\alpha < 1$. This was first observed by
Mikol\'as~\cite{mikolas1957}, who proved that for the Hurwitz zeta
function $\zeta(1-\alpha,\cdot)$ we have
\begin{equation} \label{mikol}
\int_0^1 \zeta(1-\alpha,\{k x\}) \zeta(1-\alpha,\{\ell x\}) ~dx = 2 \Gamma(\alpha)^2 \frac{\zeta(2 \alpha)}{(2\pi)^{2\alpha}} \frac{\left(\gcd(k,\ell)\right)^{2\alpha}}{(k \ell)^\alpha}
\end{equation}
for positive integers $k,\ell$ and for $\alpha>1/2$. Hurwitz's formula states that for $\alpha>1$ and $x \in [0,1]$ we have
\begin{equation*}
\zeta(1-\alpha,x) = \frac{\Gamma(\alpha)}{(2 \pi)^\alpha} \left( e^{-\pi i \alpha/2} \left( \sum_{j=1}^\infty \frac{e^{2 \pi i j x}}{j^\alpha} \right) + e^{\pi i \alpha/2} \left( \sum_{j=1}^\infty \frac{e^{-2 \pi i j x}}{j^\alpha} \right) \right)
\end{equation*}
(see for example~\cite{knopprobins} for a simple proof), which implies that
\begin{equation} \label{mikol2}
\zeta(1-\alpha,x) = \frac{2 \Gamma(\alpha)}{(2 \pi)^\alpha} \left( \cos(\pi \alpha/2) \left( \sum_{j=1}^\infty \frac{\cos 2 \pi j x}{j^\alpha} \right) + \sin (\pi \alpha/2) \left( \sum_{j=1}^\infty \frac{\sin 2 \pi j x}{j^\alpha}  \right) \right).
\end{equation}
Thus $\zeta(1-\alpha,x)$ is a function whose Fourier coefficients
are precisely of asymptotic order $j^{-\alpha}$, and in particular
$\zeta(1-\alpha,x) \in C_\alpha$.  As Mikol\'as
showed, the formula~\eqref{mikol2} continues to hold for $\alpha > 1/2$ and $0< x < 1$, which leads  to~\eqref{mikol} by the orthogonality of
the trigonometric system. By the same argument as for the case
$\alpha=1$, we get that
\begin{equation} \label{l2norm}
\int_0^1 \left( \sum_{k=1}^N c_k f(n_k x) \right)^2 dx \ll \sum_{k,\ell=1}^N |c_k c_\ell | \frac{\left(\gcd(n_k,n_\ell)\right)^{2\alpha}}{(n_k n_\ell)^\alpha}
\end{equation}
for every function $f$ in $C_\alpha$ (see Lemma~\ref{lemmaalpha}
below). For the special function $f_\alpha(x)$
from~\eqref{flambda} we get
\begin{equation} \label{l2normlam}
\int_0^1 \left( \sum_{k=1}^N c_k f_\alpha(n_k x) \right)^2 dx = \frac{\zeta(2 \alpha)}{2} \sum_{k,\ell=1}^N c_k c_\ell \frac{\left(\gcd(n_k,n_\ell)\right)^{2\alpha}}{(n_k n_\ell)^\alpha},
\end{equation}
as will also be established in Lemma~\ref{lemmaalpha} below. \\

Our two estimates~\eqref{landau2} and~\eqref{l2norm}, as well as the two identities ~\eqref{landau1} and~\eqref{l2normlam}, show that to understand the convergence of~\eqref{fkx} and~\eqref{fnkx} for $f$ in $C_\alpha$ it is important to have good upper and lower bounds for sums of the form
\begin{equation} \label{sumform}
\sum_{k,\ell=1}^N c_k c_\ell \frac{\left(\gcd(k,\ell)\right)^{2\alpha}}{(k \ell)^\alpha}\qquad \textrm{and} \qquad \sum_{k,\ell=1}^N c_k c_\ell \frac{\left(\gcd(n_k,n_\ell)\right)^{2\alpha}}{(n_k n_\ell)^\alpha}.
\end{equation}
Now let $G_N^{(\alpha)}$ be the $N \times N$ matrix with entries $g_{k \ell}$ given by
\begin{equation} \label{wint}
g_{k \ell} = \frac{(\gcd(k,\ell))^{2 \alpha}}{(k \ell)^\alpha}
\end{equation}
and $H_N^{(\alpha)}$ the $N \times N$ matrix with entries $h_{k \ell}$ of the form
$$
h_{k \ell} = \frac{(\gcd(n_k,n_\ell))^{2 \alpha}}{(n_k n_\ell)^\alpha}.
$$
It is a well-known fact that both of these matrices are positive definite (see e.g.~\cite{linds}). Thus for the largest eigenvalue $\Lambda\left(G_N^{(\alpha)}\right)$ of $G_N^{(\alpha)}$ we have
\begin{equation} \label{eigenv1}
\Lambda\left(G_N^{(\alpha)}\right) = \max_{\substack{c_1,\dots,c_N:\\~c_1^2+\dots+c_N^2=1}} ~\sum_{k,\ell=1}^N c_k c_\ell \frac{\left(\gcd(k,\ell)\right)^{2\alpha}}{(k \ell)^\alpha},
\end{equation}
and for the largest eigenvalue $\Lambda\left(H_N^{(\alpha)}\right)$ of $H_N^{(\alpha)}$ we have
\begin{equation} \label{eigenv2}
\Lambda\left(H_N^{(\alpha)}\right) = \max_{\substack{c_1,\dots,c_N:\\~c_1^2+\dots+c_N^2=1}} ~\sum_{k,\ell=1}^N c_k c_\ell \frac{\left(\gcd(n_k,n_\ell)\right)^{2\alpha}}{(n_k n_\ell)^\alpha}.
\end{equation}
Consequently, by~\eqref{l2norm} and~\eqref{l2normlam}, the problem of finding upper and lower bounds for the largest eigenvalue (or the square-root of the spectral norm) of $G_N^{(\alpha)}$ and $H_N^{(\alpha)}$ is precisely  the same as that of finding general upper bounds for respectively
\begin{equation}\label{squint}
\int_0^1 \left( \sum_{k=1}^N c_k f(k x) \right)^2 dx \qquad \text{and} \qquad \int_0^1 \left( \sum_{k=1}^N c_k f(n_k x) \right)^2 dx
\end{equation}
when $f$ is  in $ C_\alpha$, and of finding lower bounds for these integrals in the special case when $f=f_\alpha$.\\

The problem of calculating the largest eigenvalue $\Lambda \left(G_N^{(\alpha)}\right)$ of $G_N^{(\alpha)}$, and accordingly the problem of estimating the integral on the left-hand side of~\eqref{squint}, was solved by Hilberdink~\cite{hilber}, who proved that
\begin{equation} \label{hilb1}
\Lambda \left(G_N^{(\alpha)}\right) = \frac{1}{\zeta(2)} \left( e^\gamma \log \log N + \mathcal{O}(1) \right)^2 \qquad \textrm{for} \qquad \alpha = 1
\end{equation}
and
\begin{equation} \label{hilb2}
\Lambda \left(G_N^{(\alpha)}\right) \ll \exp \left(K \frac{(\log N)^{1-\alpha}}{\log \log N}\right) \qquad \textrm{for} \qquad \frac{1}{2} < \alpha < 1.
\end{equation}
In~\eqref{hilb2} the constants $K$ depends on $\alpha$, and~\eqref{hilb2} is optimal except for the precise value of $K$. For $H_N^{(\alpha)}$, in Lemma~4 and Theorem~5 of~\cite{abs} it was shown that
 \begin{equation} \label{cknorms1}
\Lambda \left(H_N^{(\alpha)}\right) \ll (\log \log N)^4 \qquad \textrm{for} \qquad \alpha=1
\end{equation}
and
\begin{equation} \label{cknorms2}
\Lambda \left(H_N^{(\alpha)}\right) \ll \exp \left(K \frac{(\log N)^{1-\alpha}}{(\log \log N)^\alpha} \right) \qquad \textrm{for} \qquad \frac{1}{2} < \alpha < 1,
\end{equation}
where the constant $K$ depends on $\alpha$. Here~\eqref{cknorms2} is optimal except for the precise value of the constant $K$, but it remains a profound problem to decide whether the exponent $4$ of $\log\log N$ on the right-hand side of~\eqref{cknorms1} is optimal. By a classical theorem of G\'al~\cite{gal}, it is known that this exponent can not be smaller than 2.\footnote{Note added in proof: This problem was solved recently by Lewko and Radziwi{\l}{\l}~\cite{lewko}, which also means an improvement of the results for the convergence problem for $f \in C_1$, mentioned in Section~\ref{sec2}. The key point of the proof in~\cite{lewko} is to show that the GCD sum on the right-hand side of \eqref{sumform} is essentially dominated by the square of the maximum of a certain random model of the Riemann zeta function. This argument intensifies the relationship between GCD sums and problems concerning the maximum of the Riemann zeta function, which was already anticipated in~\cite{abs} and~\cite{hilber}. See also~\
cite{a2}.}\\

As noted above, the results~\eqref{hilb1}--\eqref{cknorms2} imply corresponding upper bounds for the integrals in~\eqref{squint} when $f \in C_\alpha$, and the optimality of~\eqref{hilb1},~\eqref{hilb2} and~\eqref{cknorms2} implies corresponding lower bounds for the integrals in~\eqref{squint} in the special case when $f=f_\alpha$; this is the reason why the exponential factor from~\eqref{hilb2} appears and Theorem~\ref{th1}, and that from~\eqref{cknorms2} appears in Theorem~\ref{th2}. When comparing the bounds for the largest eigenvalues of $G_N^{(\alpha)}$ and $H_N^{(\alpha)}$, respectively, we note that in the case $\alpha=1$ there is an additional factor $(\log \log N)^2$ in~\eqref{cknorms1} as compared with~\eqref{hilb1}. As mentioned above, this extra factor possibly can be avoided since we do not know whether~\eqref{cknorms1} is optimal.\footnote{Cf. Footnote 1.} In the case $1/2<\alpha<1$ there is a difference between the denominator in the exponential terms in~\eqref{hilb2} and~\eqref{cknorms2}, 
respectively, which is $\log \log N$ in the one case and $(\log \log N)^\alpha$ in the other case. Since both results are optimal, this shows that there really is a significant difference between the spectral norms of $G_N^{(\alpha)}$ and of $H_N^{(\alpha)}$, and accordingly also a difference between the convergence problems for~\eqref{fkx} and~\eqref{fnkx}. In~\cite{hilber}, a connection is established between the spectral norm of $G_N^{(\alpha)}$ and the maximal order of magnitude of the Riemann zeta-function along vertical lines, using Soundararajan's ``resonance method'' from~\cite{sound}. However, Hilberdink's results cannot reach the stronger lower bounds of Montgomery~\cite{mont}, which in turn bear a striking resemblance to the bounds for the spectral norm of $H_N^{(\alpha)}$ in~\cite{abs}.\footnote{Again, cf. Footnote 1, as well as~\cite{a2}.}
\\

We close this section by making an observation on our extremal functions $f_\alpha$ and $\bar{f}_\alpha$ in~\eqref{flambda} that will be needed in the sequel. We note first that they are, up to normalization,  the even and odd parts of the Hurwitz zeta function. In fact, from the Fourier series representation in~\eqref{mikol2} it is easily seen that
\begin{eqnarray}
\frac{\zeta(1-\alpha,x) - \zeta(1-\alpha,1-x)}{2} & = & \frac{2 \Gamma(\alpha)}{(2 \pi)^\alpha} \sin(\pi \alpha/2) f_\alpha(x) \label{hurw1}
\end{eqnarray}
and
\begin{eqnarray}
\frac{\zeta(1-\alpha,x) + \zeta(1-\alpha,1-x)}{2} & = & \frac{2 \Gamma(\alpha)}{(2 \pi)^\alpha} \cos(\pi \alpha/2) \bar{f}_\alpha(x). \label{hurw2}
\end{eqnarray}
These representations can be used to describe the rate with which $f_\alpha(x)$ and $\bar{f}_\alpha(x)$ tend to infinity as $x \to 0$. Mikol\'as proved that for fixed $\alpha \in (1/2,1)$ we have
$$
\lim_{x \to 0+} x^{1-\alpha} \zeta(1-\alpha,x) = 1
$$
(this is equation (12) in~\cite{mikolas1957}). Consequently, since $\lim_{x \to 0+} \zeta(1-\alpha,1-x)=\zeta(1-\alpha,1)=\zeta(1-\alpha)$ is a constant, we have
$$
\lim_{x \to 0+} x^{1-\alpha} f_\alpha(x) = \frac{(2 \pi)^\alpha}{\Gamma(\alpha) \sin(\pi \alpha/2)}.
$$
In particular this implies that
\begin{equation} \label{falphalp}
f_\alpha \in L^p(0,1) \qquad \textrm{for} \qquad p < \frac{1}{1-\alpha},
\end{equation}
which will be a crucial ingredient in the proof of the necessary condition for almost everywhere convergence of~\eqref{fnkx}. More precisely,~\eqref{falphalp} implies that for any $\alpha \in (1/2,1)$ the function $f_\alpha$ is in $L^{2+\delta}$ for some $\delta=\delta(\alpha)>0$, which will allow us to apply Lyapunov's central limit theorem (which requires the existence of an absolute moment of order $2+\delta$ for some $\delta>0$). Similar results hold if $f_\alpha$ is replaced by $\bar{f}_\alpha$.\\

\section{Auxiliary results} \label{aux}

In the sequel, we use the notation $\| \cdot \|$ for the $L^2(0,1)$ norm. Throughout the rest of this paper, we will always assume that $\alpha \in (1/2,1)$.

\begin{lemma} \label{lemmaalpha}
Assume that $f \in C_\alpha$. Then
$$
\int_0^1 \left( \sum_{k=1}^N c_k f(n_k x) \right)^2 dx \ll \sum_{k=1}^N |c_k c_\ell | \frac{(\gcd(n_k,n_\ell))^{2 \alpha}}{(n_k n_\ell)^\alpha}.
$$
For the particular function $f_\alpha$ from~\eqref{flambda} we have
\begin{equation} \label{lemmaalpha2}
\int_0^1 \left( \sum_{k=1}^N c_k f_{\alpha}(n_k x) \right)^2 dx = \frac{\zeta(2 \alpha)}{2} \sum_{k,\ell=1}^N c_k c_\ell \frac{\left(\gcd(n_k,n_\ell)\right)^{2\alpha}}{(n_k n_\ell)^\alpha}.
\end{equation}
\end{lemma}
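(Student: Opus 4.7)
The plan is to compute $\int_0^1 f(n_k x)f(n_\ell x)\,dx$ by term-by-term integration of Fourier series, using orthogonality of the trigonometric system, and then sum over $k,\ell$. Expand the integrand using $f(x)=\sum_{j\ge 1}(a_j\sin 2\pi jx+b_j\cos 2\pi jx)$ so that
\[
f(n_k x)f(n_\ell x)=\sum_{j,j'\ge 1}\bigl(a_j a_{j'}\sin(2\pi j n_k x)\sin(2\pi j' n_\ell x)+b_j b_{j'}\cos(\cdots)\cos(\cdots)+\text{mixed terms}\bigr).
\]
The mixed $\sin\cdot\cos$ terms integrate to $0$ on $[0,1]$; a pure $\sin\cdot\sin$ (resp.\ $\cos\cdot\cos$) integrates to $1/2$ if $jn_k=j'n_\ell$ and to $0$ otherwise.

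The key combinatorial step is to parametrize the solutions of $jn_k=j'n_\ell$. Setting $d=\gcd(n_k,n_\ell)$, the identity $jn_k=j'n_\ell$ is equivalent to $j=mn_\ell/d$ and $j'=mn_k/d$ for some integer $m\ge 1$. Consequently
\[
\int_0^1 f(n_k x)f(n_\ell x)\,dx=\frac{1}{2}\sum_{m=1}^\infty\bigl(a_{mn_\ell/d}\,a_{mn_k/d}+b_{mn_\ell/d}\,b_{mn_k/d}\bigr).
\]
For the general $C_\alpha$ case we estimate $|a_j|,|b_j|\le Kj^{-\alpha}$, yielding
\[
\Bigl|\int_0^1 f(n_k x)f(n_\ell x)\,dx\Bigr|\le K^2\sum_{m=1}^\infty \frac{d^{2\alpha}}{m^{2\alpha}(n_kn_\ell)^\alpha}=K^2\zeta(2\alpha)\,\frac{(\gcd(n_k,n_\ell))^{2\alpha}}{(n_k n_\ell)^\alpha}.
\]
Expanding the square $\bigl(\sum c_k f(n_k x)\bigr)^2$, integrating termwise, applying the triangle inequality $|c_k c_\ell|\ge c_k c_\ell\cdot\mathrm{sgn}(c_kc_\ell)$, and using the above bound then gives the first assertion, with the absorbed constant hidden in the Vinogradov symbol.

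For the explicit identity involving $f_\alpha$, the same calculation applies but now $a_j=j^{-\alpha}$, $b_j=0$, and all contributions are positive, so the inequality becomes an equality:
\[
\int_0^1 f_\alpha(n_k x)f_\alpha(n_\ell x)\,dx=\frac{1}{2}\sum_{m=1}^\infty\frac{d^{2\alpha}}{m^{2\alpha}(n_kn_\ell)^\alpha}=\frac{\zeta(2\alpha)}{2}\,\frac{(\gcd(n_k,n_\ell))^{2\alpha}}{(n_k n_\ell)^\alpha}.
\]
Summing against $c_k c_\ell$ yields \eqref{lemmaalpha2}. The termwise integration is justified because $f_\alpha\in L^2$ (as $\alpha>1/2$), so its Fourier series converges in $L^2$, and therefore so does the Fourier series of $f_\alpha(n_k x)$; the series in $k$ is finite.

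There is essentially no serious obstacle here; the only point requiring slight care is the bookkeeping of the parametrization $j=mn_\ell/d,\ j'=mn_k/d$, which must be checked to be a bijection between $\{(j,j'):jn_k=j'n_\ell\}$ and $\{m\ge 1\}$, and the observation that the sine--cosine cross terms integrate to zero regardless of the frequencies, so that only matched $\sin$--$\sin$ and $\cos$--$\cos$ contributions survive. Everything else is a direct summation.
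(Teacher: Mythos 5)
Your proof is correct and takes essentially the same route as the paper: orthogonality of the trigonometric system, the parametrization $j=mn_\ell/\gcd(n_k,n_\ell)$, $j'=mn_k/\gcd(n_k,n_\ell)$ of the matching frequencies, the bound $|a_j|,|b_j|\ll j^{-\alpha}$ summed via $\zeta(2\alpha)$, and equality for $f_\alpha$. The only difference is cosmetic (you carry both sine and cosine coefficients explicitly, while the paper treats the odd case and notes the general case is identical).
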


Note that as a special case of Lemma~\ref{lemmaalpha} we have
\begin{equation}\label{normest}
\int_0^1 \left( \sum_{k=1}^N c_k f(k x) \right)^2 dx \ll \sum_{k, \ell=1}^N |c_k c_\ell | \frac{(\gcd(k,\ell))^{2 \alpha}}{(k \ell)^\alpha}.
\end{equation}

\begin{proof}[Proof of Lemma~\ref{lemmaalpha}]
The argument needed for the proof of Lemma~\ref{lemmaalpha} is a simple generalization of the arguments leading to~\eqref{landau1} and~\eqref{landau2}, respectively. We write
$$
f(x) \sim \sum_{j=1}^\infty a_j \sin 2 \pi j x,
$$
assuming, to shorten formulas, that $f$ is an odd function; the proof in the general case is exactly the same. Then, by the orthogonality of the trigonometric system, for arbitrary positive integers $m,n$ we have
\begin{eqnarray}
\int_0^1 f(m x) f(n x) ~dx & = & \frac{1}{2} \sum_{j_1,j_2=1}^\infty a_{j_1} a_{j_2} \mathds{1}(j_1 m = j_2 n) \label{j1j2} \\
& = & \frac{1}{2} \sum_{j=1}^\infty a_{j m/\gcd(m,n)} a_{j n/\gcd(m,n)} \label{formu} \\
& \ll & \sum_{j=1}^\infty \left(\frac{\gcd(m,n)}{jm} \right)^\alpha \left(\frac{\gcd(m,n)}{jn} \right)^\alpha \nonumber\\
& \ll & \left(\frac{(\gcd(m,n))^2}{mn} \right)^\alpha. \nonumber
\end{eqnarray}
In~\eqref{j1j2}, we used the fact that $j_1 m = j_2 n$ holds if and only if $j_1 = j n/\gcd(m,n)$ and $j_2 = j m/\gcd(m,n)$ for some positive integer $j$. Applying this inequality for all pairs $(n_k,n_\ell)$ gives the first part of the lemma.\\

In the case $f = f_\alpha$ we have $a_j = j^{-\alpha},~j \geq 1$. Inserting this into~\eqref{formu} we get
$$
\int_0^1 f(m x) f(n x) ~dx = \frac{1}{2} \sum_{j=1}^\infty \left(\frac{\gcd(m,n)}{jm} \right)^\alpha \left(\frac{\gcd(m,n)}{jn} \right)^\alpha = \frac{\zeta(2\alpha)}{2}  \left(\frac{(\gcd(m,n))^2}{mn} \right)^\alpha.
$$
Again we obtain the desired result by summing over all pairs $(n_k,n_\ell)$.
\end{proof}

\begin{lemma} \label{lemma1}
Assume that $f \in C_\alpha$. There exist constants $K_1,K_2$ such that
$$
\left\| \sum_{k=1}^N c_k f(kx) \right\|^2 \ll \exp \left( \frac{K_1 (\log N)^{1-\alpha}}{\log \log N} \right) \sum_{k=1}^N c_k^2
$$
and
$$
\left\| \sum_{k=1}^N c_k f(n_k x) \right\|^2 \ll \exp \left( \frac{K_2 (\log N)^{1-\alpha}}{(\log \log N)^\alpha} \right) \sum_{k=1}^N c_k^2.
$$
We can choose $K_1,K_2$ such that
$$
K_1<3/(1-\alpha)+4/\sqrt{2\alpha-1} \qquad \textrm{and} \qquad K_2< 6/(1-\alpha)+7\left(|\log(2\alpha-1)|^{1/2}+1\right).
$$
\end{lemma}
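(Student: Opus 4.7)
The plan is to combine Lemma~\ref{lemmaalpha} with the available bounds on the largest eigenvalues of the GCD matrices $G_N^{(\alpha)}$ and $H_N^{(\alpha)}$ introduced around~\eqref{wint}. By Lemma~\ref{lemmaalpha} (in the form of \eqref{normest} for the first bound, and as stated for the second), we have
\[
\left\| \sum_{k=1}^N c_k f(kx) \right\|^2 \ll \sum_{k,\ell=1}^N |c_k c_\ell| \frac{(\gcd(k,\ell))^{2\alpha}}{(k\ell)^\alpha}
\]
and the analogous inequality with $n_k$ in place of $k$. Since every entry of $G_N^{(\alpha)}$ and $H_N^{(\alpha)}$ is non-negative, replacing $c_k$ by $|c_k|$ (which does not change $\sum c_k^2$) turns the right-hand side into the quadratic form associated to the matrix. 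The variational characterizations \eqref{eigenv1} and \eqref{eigenv2} then give
\[
\sum_{k,\ell=1}^N |c_k c_\ell| \frac{(\gcd(k,\ell))^{2\alpha}}{(k\ell)^\alpha} \leq \Lambda\!\left(G_N^{(\alpha)}\right) \sum_{k=1}^N c_k^2,
\]
and similarly for $H_N^{(\alpha)}$ with $\Lambda(H_N^{(\alpha)})$.

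Next I would plug in Hilberdink's bound \eqref{hilb2} for $\Lambda(G_N^{(\alpha)})$ to obtain the first displayed inequality of the lemma, and the bound \eqref{cknorms2} from \cite{abs} for $\Lambda(H_N^{(\alpha)})$ to obtain the second. At this stage, both inequalities have the required shape; only the explicit numerical values of $K_1$ and $K_2$ remain to be justified.

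For the explicit constants, I would revisit the proofs of \eqref{hilb2} in \cite{hilber} and of \eqref{cknorms2} in \cite{abs}, tracking the dependence of the implicit constant in the exponent on $\alpha$. In both cases the constant arises from a combination of two contributions: one coming from a Dirichlet-polynomial / multiplicative structure estimate (yielding a factor involving $1/(1-\alpha)$), and one coming from an application of the Rankin trick or an Euler-product splitting (which produces the square-root term in $2\alpha-1$). A careful bookkeeping of these two contributions yields $K_1 < 3/(1-\alpha) + 4/\sqrt{2\alpha-1}$ for the $G_N^{(\alpha)}$ case and $K_2 < 6/(1-\alpha) + 7(|\log(2\alpha-1)|^{1/2}+1)$ for the $H_N^{(\alpha)}$ case, essentially because the construction for $H_N^{(\alpha)}$ doubles the contribution of the $1/(1-\alpha)$ term and replaces the $\sqrt{2\alpha-1}$-type factor by a logarithmic one.

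The main obstacle I expect is this last quantitative bookkeeping: the underlying estimates \eqref{hilb2} and \eqref{cknorms2} were proved with different optimization parameters (and, in the case of \eqref{cknorms2}, via a rather intricate partition of primes according to their size relative to powers of $\log N$), so extracting clean explicit constants requires redoing the parameter optimization with an eye to the form $3/(1-\alpha)+4/\sqrt{2\alpha-1}$ respectively $6/(1-\alpha)+7(|\log(2\alpha-1)|^{1/2}+1)$. The structural content of the lemma, by contrast, is an immediate consequence of Lemma~\ref{lemmaalpha} and the eigenvalue bounds already quoted.
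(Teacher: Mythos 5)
Your proposal follows exactly the route the paper takes: Lemma~\ref{lemmaalpha} reduces the $L^2$ norms to the GCD quadratic forms, the variational characterizations~\eqref{eigenv1}--\eqref{eigenv2} bound these by the largest eigenvalues of $G_N^{(\alpha)}$ and $H_N^{(\alpha)}$, and the bounds~\eqref{hilb2} and~\eqref{cknorms2} finish the job, with the explicit constants imported from the cited literature rather than reproved. The only small discrepancy is in the provenance of $K_2$: the paper obtains it from the method of~\cite{BonS} (which sharpens the arguments of~\cite{abs}) rather than from~\cite{abs} directly, but this does not affect the structure of the argument.
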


By Lemma~\ref{lemmaalpha} and~\eqref{eigenv1} and~\eqref{eigenv2}, the estimates in Lemma~\ref{lemma1} follow from corresponding upper bounds for the largest eigenvalues of the matrices $G_N^{(\alpha)}$ and $H_N^{(\alpha)}$, respectively, which were already stated in~\eqref{hilb2} and~\eqref{cknorms2}. The given value for $K_1$ is a coarse estimate for that stated in a more precise form in the proof of~\cite[Theorem 2.3]{hilber} and at the end of~\cite[Section 3]{hilber}; the value for $K_2$ is obtained by using the method of the recent paper~\cite{BonS}, which improves in a significant way the  arguments from~\cite{abs}.\\

Using the same method as in the proof of the Rademacher--Menshov inequality, we easily obtain the following lemma, which is a maximal version of Lemma~\ref{lemma1}. Note that the proof of the Rademacher--Menshov inequality gives an additional logarithmic factor, which however in our case can be included in the exponential term if we slightly increase the value of the constants.

\begin{lemma} \label{lemma1b}
Assume that $f \in C_\alpha$. Then there exist constants $K_1,K_2$ such that
$$
\left\| \max_{1 \leq M \leq N} \left| \sum_{k=1}^M c_k f(kx) \right| \right\|^2 \ll \exp \left( \frac{K_1 (\log N)^{1-\alpha}}{\log \log N} \right) \sum_{k=1}^N c_k^2
$$
and
$$
\left\| \max_{1 \leq M \leq N} \left| \sum_{k=1}^M c_k f(n_kx) \right|  \right\|^2 \ll \exp \left( \frac{K_2 (\log N)^{1-\alpha}}{(\log \log N)^\alpha} \right) \sum_{k=1}^N c_k^2.
$$
We can choose $K_1,K_2$ such that
$$
K_1<3/(1-\alpha)+4/\sqrt{2\alpha-1} \qquad \textrm{and} \qquad K_2< 6/(1-\alpha)+7\left(|\log(2\alpha-1)|^{1/2}+1\right).
$$
\end{lemma}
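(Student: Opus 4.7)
The plan is to adapt the classical Rademacher--Menshov dyadic blocking argument, drawing the per-block $L^2$ estimate from Lemma~\ref{lemma1} and then absorbing the usual logarithmic loss into the exponential Weyl factor. Fix $N$ and let $n$ be the least integer with $N\leq 2^n$. For any $M\in\{1,\dots,N\}$, the binary expansion of $M$ partitions $\{1,\dots,M\}$ into at most $n$ consecutive dyadic intervals of the form $I_j^{(r)}=\{r2^j+1,\dots,(r+1)2^j\}$ with $j\in\{0,\dots,n-1\}$ and $0\leq r<2^{n-j}$. Setting $T_M(x)=\sum_{k=1}^M c_k f(kx)$ and $S_j^{(r)}(x)=\sum_{k\in I_j^{(r)}} c_k f(kx)$, this yields the pointwise bound
$$
\max_{1\leq M\leq N}|T_M(x)|\;\leq\;\sum_{j=0}^{n-1}\max_{0\leq r<2^{n-j}}|S_j^{(r)}(x)|.
$$

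Next, I take the $L^2$-norm of both sides, apply Cauchy--Schwarz to the outer sum over $j$ (picking up a factor $n$), and use the crude pointwise domination $\max_r |S_j^{(r)}|^2\leq \sum_r |S_j^{(r)}|^2$ to obtain
$$
\Bigl\|\max_{1\leq M\leq N}|T_M|\Bigr\|^2\;\leq\;n\sum_{j=0}^{n-1}\sum_{r=0}^{2^{n-j}-1}\|S_j^{(r)}\|^2.
$$
Each block sum $S_j^{(r)}$ falls within the scope of the first inequality of Lemma~\ref{lemma1} once the coefficients outside $I_j^{(r)}$ are set to zero, so
$$
\|S_j^{(r)}\|^2\;\ll\;\exp\!\Bigl(\frac{K_1(\log N)^{1-\alpha}}{\log\log N}\Bigr)\sum_{k\in I_j^{(r)}}c_k^2,
$$
with $K_1$ as in Lemma~\ref{lemma1}. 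Summing over $r$ at a given scale $j$ restores $\sum_{k=1}^N c_k^2$, and then summing over the $n=O(\log N)$ scales contributes an additional factor of order $(\log N)^2$.

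Finally, I absorb this $(\log N)^2$ loss into the exponential: since $(\log N)^2=\exp(2\log\log N)$ is of strictly smaller order than $\exp\bigl(\ve(\log N)^{1-\alpha}/\log\log N\bigr)$ for every $\ve>0$, replacing $K_1$ by $K_1+\ve$ with $\ve$ sufficiently small eliminates the logarithmic factor while preserving the strict inequality $K_1+\ve<3/(1-\alpha)+4/\sqrt{2\alpha-1}$ allowed by Lemma~\ref{lemma1}. The second assertion, concerning $\sum c_k f(n_k x)$, is obtained by the same argument using the second bound in Lemma~\ref{lemma1}, which yields the corresponding $K_2$. The only mildly delicate step is this last absorption, which relies on Lemma~\ref{lemma1} being formulated with strict inequalities for the constants so that there is room to enlarge $K_1$ and $K_2$ by an arbitrarily small amount; otherwise the argument is entirely routine.
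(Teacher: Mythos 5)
Your argument is correct and is precisely the one the paper intends: the authors only sketch this lemma by invoking the Rademacher--Menshov dyadic blocking applied to the per-block bounds of Lemma~\ref{lemma1}, with the resulting $(\log N)^2$ loss absorbed into the exponential Weyl factor by slightly enlarging the constants. Your write-up fills in exactly those steps, including the key observation that the strict inequalities for $K_1,K_2$ in Lemma~\ref{lemma1} leave room for the absorption.
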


\begin{lemma}[{\cite[Lemma~6]{a1}}] \label{lemma4}
Assume that for every given $\ve>0$ there exists an $M_0(\ve)$ such that
\begin{equation} \label{*}
\left\| \sup_{M > M_0} \left| \sum_{k=M_0+1}^M c_k f(k x) \right| \right\| \leq \ve.
\end{equation}
Then
$$
\sum_{k=1}^\infty c_k f(k x)
$$
is almost everywhere convergent.
\end{lemma}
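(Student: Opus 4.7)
The plan is to deduce almost everywhere convergence from the Cauchy criterion, combining the hypothesized maximal bound with Chebyshev's inequality and the first Borel--Cantelli lemma. First, I would choose a summable sequence of tolerances, say $\ve_j = 2^{-j}$, and for each $j$ apply the hypothesis to obtain an integer $M_j := M_0(\ve_j)$ (which we may assume strictly increases in $j$) such that
$$
\|T_j\| \leq 2^{-j}, \qquad \textrm{where} \qquad T_j(x) := \sup_{M > M_j} \left| \sum_{k=M_j+1}^M c_k f(kx) \right|.
$$

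Next, I would apply Chebyshev's inequality in the form
$$
\bigl|\{x \in (0,1) : T_j(x) > 2^{-j/2}\}\bigr| \;\leq\; 2^{j}\,\|T_j\|^2 \;\leq\; 2^{-j}.
$$
Since $\sum_j 2^{-j} < \infty$, the first Borel--Cantelli lemma produces a set of full Lebesgue measure on which $T_j(x) \leq 2^{-j/2}$ for every $j$ exceeding some threshold $j_0(x)$.

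Finally, writing $S_M(x) = \sum_{k=1}^M c_k f(kx)$, the bound $T_j(x) \leq 2^{-j/2}$ means that for all $M, N > M_j$,
$$
|S_M(x) - S_N(x)| \;\leq\; |S_M(x) - S_{M_j}(x)| + |S_N(x) - S_{M_j}(x)| \;\leq\; 2\,T_j(x) \;\leq\; 2^{1-j/2}.
$$
Letting $j \to \infty$ along $j \geq j_0(x)$ shows that $(S_M(x))_{M \geq 1}$ is a Cauchy sequence of real numbers, hence convergent, for almost every $x$.

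There is no substantive obstacle here: the argument is a direct instance of the standard \emph{maximal inequality plus Borel--Cantelli} paradigm for extracting pointwise convergence from an $L^2$ Cauchy-type maximal estimate. All the genuine work is hidden in \emph{verifying} the hypothesis~\eqref{*} in any particular application; the present lemma is essentially the Cauchy criterion tailored to that method.
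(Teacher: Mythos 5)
Your argument is correct; the paper gives no proof of this lemma but imports it from \cite[Lemma~6]{a1}, and the proof there is exactly this standard combination of Chebyshev's inequality, the first Borel--Cantelli lemma, and the Cauchy criterion applied to the partial sums. (The only cosmetic point is that forcing the $M_j$ to be strictly increasing may require a triangle-inequality adjustment that doubles the bound to $2^{1-j}$, which changes nothing.)
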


For the formulation of the following lemma we note that the unit interval, equipped with Borel sets and Lebesgue measure, is a probability space. Throughout the rest of this paper, we will use the symbols $\p$ and $\E$ with respect to this probability space. Furthermore, throughout the rest of this paper we write $\log_2$ for the dyadic logarithm, and we will read $\log_2 x$ as $\max~ \{1,\log_2 x\}$.\\

The following lemma is a variant of~\cite[Lemma~5]{abs}.

\begin{lemma} \label{lemmaber}
For given $\alpha \in (1/2,1)$, set $\eta= 12/(2 \alpha - 1)$ and let $1\le S_1<T_1<S_2<T_2<\dots$ be integers such that
$$
S_{i+1} \ge T_i + \eta \log_2 i.
$$
Furthermore, let $\Delta_1, \Delta_2,\dots$ be sets of integers such
that $\Delta_i \subset [2^{S_i}, 2^{T_i}]$ and each element of $\Delta_i$ is
divisible by $2^{S_i}$. For $i \geq 1$ and $x \in (0,1)$ set
$$
X_i= X_i(x):=\sum_{k \in \Delta_i} f_\alpha(k x).
$$
Then there exist \emph{independent} random variables $Y_1,Y_2,\dots$ on the
probability space $((0,1),\mathcal B, {\mathbb P})$ such that $\mathbb{E}Y_i=0$ and
$$
\|X_i-Y_i\|\ll i^{-2} \cdot \# \Delta_i.
$$
\end{lemma}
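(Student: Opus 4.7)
The plan is to define $Y_i$ as the conditional expectation of $X_i$ onto a $\sigma$-algebra generated by binary digits of $x$ in a block disjoint from the blocks used for $j\neq i$; independence is then automatic, and the $L^2$ error reduces to a Fourier--Haar estimate on the single-mode dilations $f_\alpha(m\,\cdot\,)$.

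Since $k=2^{S_i}m$ for $k\in\Delta_i$ with $m\in\Delta_i':=2^{-S_i}\Delta_i\subset[1,2^{T_i-S_i}]$, the $1$-periodicity of $f_\alpha$ gives $f_\alpha(kx)=f_\alpha(mu_i)$ with $u_i:=2^{S_i}x\bmod 1$, so $X_i(x)=g_i(u_i)$ where $g_i(u):=\sum_{m\in\Delta_i'}f_\alpha(mu)$. Set $L_i:=S_{i+1}-S_i$ and let $\mathcal{B}_i$ be the $\sigma$-algebra generated by the binary digits $x_{S_i+1},\dots,x_{S_{i+1}}$. The ordering $S_1<T_1<S_2<T_2<\cdots$ makes these digit-ranges pairwise disjoint, so the $\mathcal{B}_i$ are mutually independent. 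Setting $Y_i:=\E[X_i\mid\mathcal{B}_i]$ thus yields an independent family, with $\E Y_i=\E X_i=0$ because each Fourier mode of $f_\alpha$ has mean zero.

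Because $u_i=0.x_{S_i+1}x_{S_i+2}\cdots$ in binary, conditioning on $\mathcal{B}_i$ fixes the dyadic interval of length $2^{-L_i}$ containing $u_i$ and leaves $u_i$ uniform on it, so $Y_i(x)=(P_{L_i}g_i)(u_i)$, where $P_L$ denotes the $L^2([0,1])$-orthogonal projection onto step functions on dyadic intervals of length $2^{-L}$. Thus $\|X_i-Y_i\|^2=\|(I-P_{L_i})g_i\|^2$, and the triangle inequality reduces the problem to the single-mode estimate
\[
\|(I-P_L)f_\alpha(m\,\cdot\,)\|\ll(m/2^L)^{\alpha-1/2}\qquad(m\le 2^L),
\]
which I would establish by expanding $f_\alpha(mu)=\sum_j j^{-\alpha}\sin 2\pi jmu$ and computing Haar coefficients at scales $\ge L$. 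The dominant contribution to the squared norm comes from the $m$ dyadic intervals intersecting the singular set $\{k/m\}$ where $f_\alpha(m\,\cdot\,)$ has its $(\alpha-1)$-H\"older blow-up; each such interval contributes $\approx m^{2\alpha-2}\,2^{-L(2\alpha-1)}$, summing to $(m/2^L)^{2\alpha-1}$, while the smooth contribution from the remaining intervals is of the same order.

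Combining, the gap hypothesis yields $m/2^{L_i}\le 2^{T_i-S_i-L_i}\le i^{-\eta}$ for every $m\in\Delta_i'$, so each single-mode term is $\ll i^{-\eta(\alpha-1/2)}=i^{-6}$ with $\eta=12/(2\alpha-1)$. Summing over the $\#\Delta_i$ modes gives $\|X_i-Y_i\|\ll(\#\Delta_i)\,i^{-6}\ll i^{-2}\,\#\Delta_i$ as required. The main obstacle I expect is the single-mode Fourier--Haar bound itself, where one must contend with the non-orthogonality of $(I-P_L)$ acting on the distinct sines $\sin 2\pi jmu$; this has to be done via direct Haar-coefficient computation. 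The generous constant $12$ in $\eta$ (compared with the bare minimum $\eta\ge 4/(2\alpha-1)$ needed for single-mode decay) provides enough slack to absorb both the triangle-inequality loss across $\#\Delta_i$ modes and the $\alpha$-dependent factor $(2\alpha-1)^{-1/2}$ appearing in the single-mode constant.
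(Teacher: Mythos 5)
Your construction of $Y_i$ is in substance identical to the paper's: the paper takes $Y_i=\E(X_i\mid\mathcal{F}_i)$ with $\mathcal{F}_i$ the dyadic $\sigma$-field of scale $2^{-S_{i+1}}$, and since $X_i$ depends only on the binary digits of $x$ beyond position $S_i$, this coincides almost everywhere with your $\E(X_i\mid\mathcal{B}_i)$; likewise your independence argument via disjoint digit blocks and the paper's via periodicity of $f_\alpha(kx)$ on the intervals of length $2^{-S_{i+1}}$ (for $k$ divisible by $2^{S_{i+1}}$) are two phrasings of the same fact. Where you genuinely diverge is the quantitative step. The paper does not compute Haar coefficients of $f_\alpha(m\,\cdot\,)$ at all; it quotes a discretization lemma of Berkes (Lemma~\ref{lemmaber2} in the text, valid for an arbitrary $f\in C_\alpha$), which gives $\|f(k\,\cdot\,)-[f(k\,\cdot\,)]_m\|\ll (k/m)^{(2\alpha-1)/6}$, and the choice $\eta=12/(2\alpha-1)$ is calibrated exactly so that $\left(2^{T_i-S_{i+1}}\right)^{(2\alpha-1)/6}\le i^{-2}$. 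Your proposed single-mode bound $\|(I-P_L)f_\alpha(m\,\cdot\,)\|\ll (2\alpha-1)^{-1/2}(m/2^L)^{\alpha-1/2}$ is sharper (exponent $(2\alpha-1)/2$ instead of $(2\alpha-1)/6$) because it exploits the explicit singularity $f_\alpha(x)\sim c\,x^{\alpha-1}$ of this particular function, which the paper itself records via the Hurwitz zeta function; your heuristic (each of the $m$ singular dyadic intervals contributes about $m^{2\alpha-2}2^{-L(2\alpha-1)}$ to the squared error, and the smooth part is controlled by a Poincar\'e-type bound using $|f_\alpha'(v)|\ll v^{\alpha-2}$ together with a convergent sum over dyadic distances, since $2\alpha-4<-2$) is sound and can be completed, though as you note it is the one step you have not written out in full. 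So your route yields $i^{-6}$ per mode where only $i^{-2}$ is needed, at the cost of proving the single-mode estimate yourself rather than citing it; the paper's route is shorter and applies verbatim to any $f\in C_\alpha$, not just to $f_\alpha$. In either case the triangle inequality over the $\#\Delta_i$ modes closes the argument exactly as you indicate.
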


For the proof of Lemma~\ref{lemmaber}, we need the following lemma, which is~\cite[Lemma~3.1]{berk2}. Here, given an integrable function $g(x)$ on $[0,1]$ and an arbitrary integer $m$, we write $[g]_m$ for the function which takes the constant value
$$
m \int_{k/m}^{(k+1)/m} g(x) ~dx
$$
in the intervals $[k/m,(k+1)/m)$, for $k=0, \dots, m-1$.

\begin{lemma}[{\cite[Lemma~3.1]{berk2}}] \label{lemmaber2}
Assume that $f \in C_\alpha$. Let $k \geq 1$ be a positive integer, and write $g(x)=f(k x)$. Then for any integer $m \geq k$ we have
$$
\left\| g - [g]_m \right\| \ll \left( \frac{k}{m} \right)^{(2 \alpha -1)/6}.
$$
\end{lemma}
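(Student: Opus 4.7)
My plan is a standard high--low frequency decomposition combined with the Poincar\'e--Wirtinger inequality on intervals of length $1/m$. First I would observe that the averaging map $u \mapsto [u]_m$ is an $L^2(0,1)$ contraction (Cauchy--Schwarz on each length-$1/m$ interval, then sum), so that $\|[g_M - g]_m\| \leq \|g_M - g\|$ for any $g_M$. Hence, for every parameter $M \geq k$ still to be chosen, writing $g_M$ for the truncation of the Fourier series of $g$ to frequencies $|n| \leq M$, the triangle inequality gives
$$
\|g - [g]_m\| \;\leq\; \|g - g_M\| + \|g_M - [g_M]_m\| + \|[g_M - g]_m\| \;\leq\; 2\|g - g_M\| + \|g_M - [g_M]_m\|.
$$
The program is then to bound the two remaining terms separately and optimize in $M$.

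For the high-frequency term, I would use the fact that $g(x) = f(kx)$ has Fourier coefficients supported on the multiples of $k$, with $|\hat{g}(jk)| \ll j^{-\alpha}$ by the assumption $f \in C_\alpha$. Parseval's identity then yields
$$
\|g - g_M\|^2 \;=\; \sum_{|n| > M} |\hat{g}(n)|^2 \;\ll\; \sum_{j > M/k} j^{-2\alpha} \;\ll\; (k/M)^{2\alpha-1},
$$
using $2\alpha > 1$. For the low-frequency term, on each interval $[\ell/m, (\ell+1)/m)$ the step function $[g_M]_m$ takes the mean value of $g_M$; I would invoke the Poincar\'e--Wirtinger inequality (sharp constant $L/\pi$ on an interval of length $L$) on each such interval and sum to obtain $\|g_M - [g_M]_m\|^2 \leq (\pi m)^{-2} \|g_M'\|^2$. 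A second application of Parseval to the trigonometric polynomial $g_M$ then gives
$$
\|g_M'\|^2 \;\ll\; \sum_{1 \leq jk \leq M} (jk)^2 j^{-2\alpha} \;\ll\; k^2 (M/k)^{3-2\alpha} \;=\; k^{2\alpha-1} M^{3-2\alpha},
$$
so that $\|g_M - [g_M]_m\|^2 \ll m^{-2} k^{2\alpha-1} M^{3-2\alpha}$.

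Finally I would choose $M = m$, which balances the two estimates and yields $\|g - [g]_m\|^2 \ll (k/m)^{2\alpha-1}$; in particular $\|g - [g]_m\| \ll (k/m)^{(2\alpha-1)/2}$, which since $k/m \leq 1$ is stronger than the stated bound $(k/m)^{(2\alpha-1)/6}$. I do not foresee a serious obstacle: the argument is elementary and the only mild technical points are the term-by-term differentiation of the finite sum $g_M$ and the verification of the Poincar\'e--Wirtinger constant $L/\pi$ for zero-mean functions on an interval of length $L$. The weaker exponent $(2\alpha-1)/6$ in the statement presumably reflects a more robust (but less sharp) derivation in \cite{berk2}; since we only need the weaker form in the sequel, the above argument more than suffices.
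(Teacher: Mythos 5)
Your argument is correct, and it is worth noting at the outset that the paper itself offers no proof of this statement: Lemma~\ref{lemmaber2} is imported verbatim from \cite[Lemma~3.1]{berk2}, so there is no in-paper argument to compare against. Your high/low frequency decomposition is sound in every step: the averaging operator $u\mapsto [u]_m$ is conditional expectation onto the dyadic-type partition into intervals of length $1/m$ and hence an $L^2$ contraction (or, as you say, Cauchy--Schwarz interval by interval); the tail estimate $\|g-g_M\|^2\ll\sum_{j>M/k}j^{-2\alpha}\ll (k/M)^{2\alpha-1}$ uses only $f\in C_\alpha$, Parseval, and $2\alpha>1$; the Poincar\'e--Wirtinger step with constant $L/\pi$ on each interval is valid (and even a crude constant $L$ would do); and $\|g_M'\|^2\ll k^{2\alpha-1}M^{3-2\alpha}$ follows from term-by-term differentiation of the finite sum, using $2-2\alpha>-1$ to sum $j^{2-2\alpha}$. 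With $M=m\geq k$ both contributions are $(k/m)^{2\alpha-1}$, so you obtain the exponent $(2\alpha-1)/2$, which indeed dominates the stated $(2\alpha-1)/6$ since $k/m\leq 1$. Two small remarks: the hypothesis $m\geq k$ is exactly what makes both of your sums nonempty/nontrivial, so it is genuinely used; and the weaker exponent $(2\alpha-1)/6$ is precisely what is calibrated against the choice $\eta=12/(2\alpha-1)$ in Lemma~\ref{lemmaber} (so that $\eta(2\alpha-1)/6=2$), meaning your sharper bound would permit a smaller $\eta$ there, though nothing in the paper requires this improvement. In short, your proof is a clean, self-contained, and quantitatively stronger substitute for the cited result.
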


\begin{proof}[Proof of Lemma~\ref{lemmaber}:]
Let $\mathcal{F}_i$ denote the $\sigma$-field generated by the dyadic intervals
\begin{equation} \label{ujdef}
U_j := \left[j 2^{-S_{i+1}},(j+1)2^{-S_{i+1}}\right), \qquad 0 \leq j < 2^{S_{i+1}},
\end{equation}
and set
$$
\xi_k = \xi_k(\cdot) = \E \left( f_\alpha (k \cdot) | \mathcal{F}_i \right), \qquad k \in \Delta_i,
$$
and
$$
Y_i = Y_i(x) = \sum_{k \in \Delta_i} \xi_k(x).
$$
Then we clearly have $\E \xi_k = 0$, which implies $\E Y_i = 0$. By Lemma~\ref{lemmaber2} and~\eqref{*} for every $k \in \Delta_i$ we have 
$$
\left\| \xi_k(\cdot) - f_\alpha (k \cdot) \right\| \ll \left( \frac{k}{2^{S_{i+1}}}\right)^{(2\alpha-1)/6} \ll \left( \frac{2^{T_i}}{2^{T_i + \eta \log_2 i}}\right)^{(2\alpha-1)/6} \ll i^{-\eta (2 \alpha-1) /6} \ll i^{-2},
$$
which implies that
$$
\left\| X_i - Y_i \right\| \ll i^{-2} \cdot \# \Delta_i.
$$
Since by assumption every $k \in \Delta_{i+1}$ is a multiple of $2^{S_{i+1}}$, each interval $U_j$ in~\eqref{ujdef} is a period interval of $f_\alpha(kx)$ for all $k \in \Delta_{i+1}$, and consequently also for $\xi_k$ for all $k\in \Delta_{i+1}$. Consequently $Y_{i+1}$ is independent of the $\sigma$-field $\mathcal{F}_i$. Since $\mathcal{F}_1 \subset \mathcal{F}_2 \subset \dots$ and since $Y_i$ is $\mathcal{F}_i$-measurable, the random variables $Y_1, Y_2, \dots$ are independent.
\end{proof}

The following lemma is a simple consequence of~\cite[Proposition~3.1]{hilber}, from which it can be deduced in the same way as relation (3.2) of~\cite{hilber}.

\begin{lemma} \label{lemmahil}
We have
\begin{equation*}\label{h32}
\sum_{k, \ell=1}^{N} c_k c_\ell \frac{(\text{gcd} (k, \ell))^{2\alpha}}{(k\ell)^\alpha} \le \sum_{k=1}^{{N^2}} b_k^2,
\end{equation*}
where $b_k$ are defined by
\begin{equation*}
b_k=\frac{1}{k^\alpha} \sum_{d|k} d^\alpha |c_d|.
\end{equation*}
\end{lemma}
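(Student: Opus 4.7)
The plan is to prove the inequality by expanding $\sum_{k=1}^{N^2} b_k^2$ directly from the definition of $b_k$ and swapping the order of summation, so that the divisor structure on the right-hand side reproduces the GCD kernel on the left. Substituting $b_k = k^{-\alpha}\sum_{d\mid k} d^\alpha |c_d|$ and expanding the square gives
\[
\sum_{k=1}^{N^2} b_k^2 \;=\; \sum_{d,d'=1}^{N} d^\alpha (d')^\alpha |c_d||c_{d'}| \sum_{\substack{k \le N^2 \\ \lcm(d,d') \mid k}} k^{-2\alpha},
\]
where I use that $c_d = 0$ for $d > N$ so only $d,d' \le N$ contribute to the double sum.

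Next I would substitute $k = \lcm(d,d')\, m$ in the inner sum and apply the identity $\lcm(d,d')\gcd(d,d') = dd'$ to collapse the prefactor $d^\alpha(d')^\alpha \lcm(d,d')^{-2\alpha}$ to $\gcd(d,d')^{2\alpha}/(dd')^\alpha$, which yields
\[
\sum_{k=1}^{N^2} b_k^2 \;=\; \sum_{d,d'=1}^{N} \frac{\gcd(d,d')^{2\alpha}}{(dd')^\alpha} |c_d| |c_{d'}| \sum_{m=1}^{\lfloor N^2/\lcm(d,d')\rfloor} m^{-2\alpha}.
\]
The key observation for closing the argument is that for any $d,d' \le N$ we have $\lcm(d,d') \le dd' \le N^2$, so the inner sum in $m$ always contains at least the term $m = 1$ and is therefore bounded below by $1$. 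Combining this with the elementary inequality $|c_d||c_{d'}| \ge c_d c_{d'}$, valid for arbitrary real $c_d, c_{d'}$, then gives the claimed bound at once.

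I do not expect any real obstacle; the argument is essentially a rearrangement in the style of Wintner and Hilberdink. The one point worth flagging is that the truncation $N^2$ on the right-hand side is precisely what is needed to guarantee the lower bound $\ge 1$ for the inner $m$-sum uniformly over all pairs $d,d' \le N$: the extremal case $\lcm(d,d') \sim N^2$ occurs when $d, d'$ are coprime and both near $N$, so a strictly smaller cutoff would force a positive constant factor into the inequality.
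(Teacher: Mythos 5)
Your proof is correct and is essentially the argument the paper has in mind: the paper itself only cites Hilberdink's Proposition~3.1 and relation~(3.2), and your rearrangement (expand $b_k^2$, swap sums, substitute $k=\lcm(d,d')m$, use $\lcm\cdot\gcd=dd'$, and keep only the $m=1$ term, which the cutoff $N^2$ guarantees is present) is precisely the computation underlying that reference. Your closing remark correctly identifies why the exponent $2$ in the cutoff $N^2$ is exactly what is needed.
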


\section{Proofs} \label{clambdap}

\begin{proof}[Proof of the convergence part of Theorem~\ref{th1}:]
Throughout this proof, we will write $K_1$ for the constant in the statement of Theorem~\ref{th1}, and $K_2$ for the constant in the statement of the first part of Lemma~\ref{lemma1}. Note that we can assume that $K_1 > K_2$. Relation~\eqref{th1cond} implies that
$$
\sum_{k=e^m+1}^{e^{m+1}} c_k^2 \exp \left( \frac{K_1 (\log k)^{1-\alpha}}{\log \log k}\right) \ll 1 \qquad \textrm{for $m \geq 1$},
$$
which also implies that
$$
\sum_{k=e^m+1}^{e^{m+1}} c_k^2 \ll \exp \left( \frac{-K_1 m^{1-\alpha}}{\log m}\right) \qquad \textrm{for $m \geq 1$}.
$$
Consequently by Lemma~\ref{lemma1} we have, for any $M,N$ satisfying $e^m < M < N < e^{m+1}$,
\begin{eqnarray}
\left\| \sum_{k=M}^N c_k f(kx) \right\|^2 & \ll & \exp \left( \frac{K_2 (m+1)^{1-\alpha}}{\log (m+1)}\right) \exp \left( \frac{-K_1 m^{1-\alpha}}{\log m}\right) \nonumber\\
& \ll & \exp \left( \frac{-\ve m^{1-\alpha}}{\log m}\right) \label{m0equ}
\end{eqnarray}
for some $\ve>0$, since $K_1>K_2$. For given $M<N$, let $\hat{m}$ denote the integer for which $M \in \left(e^{\hat{m}},e^{\hat{m}+1}\right]$, and $\hat{n}$ the integer for which $N \in \left(e^{\hat{n}},e^{\hat{n}+1}\right]$. If $\hat{m}=\hat{n}$, then by~\eqref{m0equ} we have
\begin{equation} \label{sumsmall1}
\left\| \sum_{k=M}^N c_k f(k x) \right\| \ll \exp \left( \frac{-\ve \hat{m}^{1-\alpha}}{2 \log \hat{m}}\right).
\end{equation}
If $\hat{m} < \hat{n}$, then by~\eqref{m0equ} and Minkowski's inequality we have
\begin{eqnarray}
& & \left\| \sum_{k=M}^N c_k f(k x) \right\| \nonumber\\
 & \ll & \left\| \sum_{k=M}^{e^{\hat{m}+1}} c_k f(k x) \right\| + \sum_{m=\hat{m}+1}^{\hat{n}-1} \left\| \sum_{k=e^m+1}^{e^{m+1}} c_k f(k x) \right\|  + \left\| \sum_{k=e^{\hat{n}}+1}^{N} c_k f(k x) \right\| \nonumber\\
& \ll & \sum_{m=\hat{m}}^\infty \exp \left( \frac{-\ve m^{1-\alpha}}{2 \log m}\right). \label{sumsmall2}
\end{eqnarray}
Both~\eqref{sumsmall1} and~\eqref{sumsmall2} can be made arbitrarily small if $\hat{m}$ is assumed to be sufficiently large (note that~\eqref{sumsmall2} is the tail of a convergent series). Thus by the Cauchy convergence test the series $\sum_{k=1}^\infty c_k f(kx)$ is convergent in $L^2$. In a similar way, using Lemma~\ref{lemma1b} instead of Lemma~\ref{lemma1}, we obtain for any $M<N$
$$
\left\| \max_{M<L \leq N} \left| \sum_{k=M}^L c_k f(k x) \right| \right\|  \ll \sum_{m=\hat{m}}^\infty \exp \left( \frac{-\ve m^{1-\alpha}}{2 \log m}\right),
$$
where $\hat{m}$ is defined as before. Again the right-hand side can be made arbitrarily small if $M$ is assumed to be sufficiently large. Thus the monotone convergence theorem and Lemma~\ref{lemma4} imply that the series $\sum_{k=1}^\infty c_k f(kx)$ is almost everywhere convergent.
\end{proof}

\begin{proof}[Proof of the optimality of Theorem~\ref{th1}:]
For given $\alpha \in (1/2,1)$, we will show that there exists a sequence $(c_k)_{k \geq 1}$ satisfying~\eqref{th1cond} for a ``small'' value of $K$, for which for the function $f(x)=f_\alpha(x)$ from~\eqref{flambda} the series $\sum_{k=1}^\infty c_k f_\alpha(k x)$ is divergent in $L^2$. We will construct $(c_k)_{k \geq   1}$ such that it is supported on a set of indices which have a small number of prime factors; this idea already appears in~\cite{abs,gal,hilber} and other places. However, there it is only used to construct a finite sequence, whereas in the present case we have to construct an infinite sequence. Note that by~\eqref{mikol},~\eqref{hurw1} and~\eqref{hurw2} the $L^2$ norm of sums of dilated functions $f_\alpha(x)$, $\bar{f}_\alpha(x)$ and $\zeta(1-\alpha,x)$ is the same, up to multiplication with a constant, and consequently we could also use the functions $\bar{f}_\alpha(x)$ or $\zeta(1-\alpha,x)$ instead of $f_\alpha(x)$. \\

We write $(p_r)_{r \geq 1}$ for the sequences of primes in increasing order. We define sets $\Delta_i$ in the following way: for given $i \geq 1$, the set $\Delta_i$ contains those positive integers which are of the form
$$
2^{2i} p_1^{w_1} p_2^{w_2} \dots p_i^{w_i} \qquad \textrm{for} \qquad (w_1, \dots, w_i) \in \{0,1\}^i.
$$
By construction the sets $\Delta_i,~i \geq 1,$ are mutually disjoint (since all numbers in $\Delta_i$ are multiples of either $2^{2i}$ or $2^{2i+1}$, but not of $2^{2i+2}$). Note that the number of elements of $\Delta_i$ is $2^i$.\\

Let $\ve>0$ be fixed, and set $\eta=(1-2\ve)/(1+\ve)$. We define
$$
c_k =  \left\{  \begin{array}{ll} 2^{-i/2} i^{-1} \exp\left( - \frac{\eta}{2(1-\alpha)} (\log k)^{1-\alpha} (\log \log k)^{-1} \right) & \textrm{if $k \in \Delta_i$ for some $i \geq 1$,} \\ 0 & \textrm{otherwise.} \end{array}\right.
$$
Then we have
\begin{eqnarray*}
\sum_{k=1}^\infty c_k^2 \exp\left(\frac{\eta}{1-\alpha} (\log k)^{1-\alpha} (\log \log k)^{-1} \right) & = & \sum_{i=1}^\infty \sum_{k \in \Delta_i} 2^{-i} i^{-2} \\
& = & \sum_{i=1}^\infty i^{-2} \qquad < \infty.
\end{eqnarray*}
By the prime number theorem for all sufficiently large $i$ for all $k \in \Delta_i$ we have
$$
k \leq 2^{2i} \prod_{r=1}^i p_r \leq 2^{2i} \left(((1+\ve) i \log i)^i\right),
$$
and consequently for sufficiently large $i$ and for all $k \in \Delta_i$
$$
\frac{(\log k)^{1-\alpha}}{(\log \log k)} \leq (1+\ve) (i \log i)^{1-\alpha} (\log i)^{-1} = (1+\ve) i^{1-\alpha} (\log i)^{-\alpha}.
$$
Thus for $i \geq 1$ for all $k \in \Delta_i$ we have
\begin{equation} \label{cksizeequ}
c_k \gg 2^{-i/2} i^{-1} \exp\left( - \frac{\eta (1+\ve)}{2(1-\alpha)}  i^{1-\alpha} (\log i)^{-\alpha} \right).
\end{equation}
Using the second part of Lemma~\ref{lemmaalpha} and the facts that $f_\alpha$ has only positive Fourier coefficients and that all coefficients $c_k$ are non-negative, we have
\begin{eqnarray}
\lim_{N \to \infty} \left\| \sum_{k=1}^N c_k f_\alpha(k x) \right\|^2 & \geq & \lim_{M \to \infty} \left\|\sum_{i=1}^M ~\sum_{k \in \Delta_i} c_k f_\alpha(k x) \right\|^2 \nonumber\\
& \geq & \lim_{M \to \infty} \sum_{i=1}^M \left\| \sum_{k \in \Delta_i} c_k f_\alpha(k x) \right\|^2 \nonumber\\
& = & \sum_{i=1}^\infty ~\sum_{k,\ell \in \Delta_i} c_k c_{\ell} \frac{(\gcd(k,\ell))^{2 \alpha}}{(k \ell)^\alpha}. \label{cksizeequ2}
\end{eqnarray}
By the structure of the set $\Delta_i$ for any fixed $k \in \Delta_i$ we have
\begin{equation*}
\sum_{\ell \in \Delta_i} \frac{(\gcd(k,\ell))^{2 \alpha}}{(k \ell)^\alpha} = \prod_{r=1}^i \left(1 + p_r^{-\alpha}\right),
\end{equation*}
which implies that
\begin{equation} \label{struct}
\sum_{k,\ell \in \Delta_i} \frac{(\gcd(k,\ell))^{2 \alpha}}{(k \ell)^\alpha} = 2^i \prod_{r=1}^i \left(1 + p_r^{-\alpha}\right)
\end{equation}
(an argument of this type already appears in G{\'a}l's paper~\cite{gal}). By the prime number theorem we have
$$
\prod_{r=1}^i \left(1 + p_r^{-\alpha}\right) \gg \exp \left(\frac{1-\ve}{1-\alpha} i^{1-\alpha} (\log i)^{-\alpha} \right).
$$
Combining~\eqref{cksizeequ},~\eqref{cksizeequ2} and~\eqref{struct} we get
\begin{eqnarray}
\lim_{N \to \infty} \left\| \sum_{k=1}^N c_k f_\alpha(k x) \right\|^2 & \gg & \sum_{i=1}^\infty i^{-2} \exp\left(\frac{(1-\ve) - \eta  (1+\ve)}{1-\alpha}  i^{1-\alpha} (\log i)^{-\alpha} \right). \label{rhsest2}
\end{eqnarray}
Note that $(1-\ve) - \eta  (1+\ve)=\ve$, and thus the series on the right-hand side of~\eqref{rhsest2} is divergent. Consequently the series $\sum_{k=1}^\infty c_k f_\alpha(kx)$ is divergent in $L^2$, although $(c_k)_{k \geq 1}$ satisfies the extra convergence condition~\eqref{th1cond} for $K=\eta/(1-\alpha)$. Note that by choosing $\ve$ small, $\eta$ can be moved arbitrarily close to 1. This proves the optimality of Theorem~\ref{th1}, apart from the precise optimal value of the constant $K$ in~\eqref{th1cond}.\\
\end{proof}

\begin{proof}[Proof of the convergence part of Theorem~\ref{th2}: ]
The proof of the convergence part of Theorem~\ref{th2} can be given in exactly the same way as the proof of the convergence part of Theorem~\ref{th1} above, using the second part of Lemma~\ref{lemma1} and~\ref{lemma1b} instead of the first part, respectively.\\
\end{proof}

\begin{proof}[Proof of the optimality of Theorem~\ref{th2}: ]
The optimality of condition~\eqref{th2cond} in the case of $L^2$ convergence can be shown in a similar way as the optimality of condition~\eqref{th1cond} in Theorem~\ref{th1}. Again we construct a set of integers which is composed of a relatively small number of prime factors. In particular, again we will use an equality similar to~\eqref{struct}, which allows a precise computation of the corresponding GCD sum. Again we choose $f=f_\alpha$, but as in the proof of the optimality of Theorem~\ref{th1} we could  also use the functions $\bar{f}_\alpha$ or $\zeta(1-\alpha,\cdot)$ instead. The main difference between the present case and the proof of Theorem~\ref{th1} is the fact that we can make the sequence $(n_k)_{k \geq 1}$ grow as fast as we wish. Together with the well-established principle that lacunary sequences of functions show almost independent behavior, this is the reason why for Theorem~\ref{th2} we can also prove optimality with respect to almost everywhere convergence (which was not possible for 
Theorem~\ref{th1}).\\

First we recall that $f_\alpha \in L^{p}(0,1)$ for $p < (1-\alpha)^{-1}$, which was established in~\eqref{falphalp}. Thus we can choose $\delta \in (0,1)$ such that $2+\delta < (1-\alpha)^{-1}$. Furthermore, we can find a number $\beta \in (0,1)$ which satisfies
$$
\beta < \frac{\delta}{2+\delta}.
$$
For this number $\beta$ we have
\begin{equation} \label{betasize}
\left(-\frac{1}{2}+\frac{\beta}{2} \right) (2+\delta) < -1.
\end{equation}
Let $(p_r)_{r \geq 1}$ denote the sequence of primes in increasing order. We set $A(1)=1$ and
$$
A(i) = \left\lceil \beta \log_2 i \right\rceil, \qquad i \geq 2.
$$
We define the numbers $S_i$ and $T_i$ recursively in the following way:
\begin{itemize}
\item $S_1=2,$
\item $T_i = S_i + \left\lceil \log_2 \left( \prod_{r=1}^{A(i)} p_r \right) \right\rceil, \qquad i \geq 1,$
\item $S_{i+1} = T_i + \left\lceil \eta \log_2 i \right\rceil, \qquad  i \geq 1, \qquad \textrm{where $\eta = 12/(2 \alpha - 1)$}$.
\end{itemize}
Then obviously the numbers $(S_i)_{i \geq 1}$ and $(T_i)_{i \geq 1}$ satisfy the conditions of Lemma~\ref{lemmaber}. For $i \geq 1$, we define $\Delta_i$ as the set of all numbers $k$ of the form
$$
k = 2^{S_i} \prod_{r=1}^{A(i)} p_r^{w_r}, \qquad \textrm{where} \qquad (w_1, \dots, w_{A(i)}) \in \{0,1\}^{A(i)}.
$$
Then clearly all elements of $\Delta_i$ are divisible by $2^{S_i}$, and $\Delta_i \subset [2^{S_i},2^{T_i}]$; that is, the sets $\Delta_i$ also satisfy the assumptions of Lemma~\ref{lemmaber}. Let $(n_k)_{k \geq 1}$ denote the sequence consisting of the elements of $\bigcup_{i \geq 1} \Delta_i$, sorted in increasing order. Note that by definition we have
$$
\# \Delta_i=2^{A(i)} \in \left[i^\beta, 2 i^{\beta}\right].
$$
Furthermore we define sets of integers $\Gamma_i,~i \geq 1$, such that
$$
k \in \Gamma_i \qquad \textrm{if and only if} \qquad n_k \in \Delta_i.
$$
Then $(\Gamma_i)_{i \geq 1}$ is a decomposition of $\N$. Let $K_1$ denote a ``small'' constant with a value to be determined later. For every $k \geq 1$ there is an $i$ such that $k \in \Gamma_i$, and we define
$$
c_k = i^{-\beta/2-1/2} (\log i)^{-1} \exp\left( - \frac{K_1 (\log i)^{1-\alpha}}{2(\log \log i)^{\alpha}} \right).
$$
Note that the value of $c_k$ only depends on the index $i$ for which $k \in \Gamma_i$. Thus we can also define numbers $(d_i)_{i \geq 1}$ such that
$$
d_i = c_k \qquad \textrm{whenever} \qquad k \in \Gamma_i, \qquad \textrm{for $i \geq 1, ~k \geq 1$},
$$
which implies that
$$
\sum_{k \in \Delta_i} c_k f_\alpha (k x) = d_i \sum_{k \in \Gamma_i} f_\alpha (n_k x).
$$
Furthermore we have
\begin{eqnarray}
\sum_{k=1}^\infty c_k^2 \exp \left( \frac{K_1 (\log i)^{1-\alpha}}{(\log \log i)^{\alpha}} \right) & = & \sum_{i \geq 1} \sum_{k \in \Gamma_i} i^{-\beta-1} (\log i)^{-2} \cdot \underbrace{\# \Gamma_i}_{\leq 2 i^\beta} \label{lhs1}\\
& \leq & 2 \sum_{i \geq 1} i^{-1} (\log i)^{-2}. \label{lhs2}
\end{eqnarray}
Since the series in~\eqref{lhs2} is convergent, the same holds for the series on the left-hand side of~\eqref{lhs1}. Furthermore, since for $k \in \Gamma_i$ we have
$$
k \ll i^{\beta+1},
$$
the convergence of the left-hand side of~\eqref{lhs1} implies that there exists a positive constant $K_2$ (depending on $K_1$) such that
$$
\sum_{k=1}^\infty c_k^2 \exp \left( K_2 (\log k)^{1-\alpha} (\log \log k)^{-\alpha} \right) < \infty.
$$
As in the lines following~\eqref{struct} we get
\begin{eqnarray}
\sum_{k,\ell \in \Gamma_i} \frac{(\gcd(n_k,n_\ell))^{2 \alpha}}{(n_k n_\ell)^\alpha} & = & \sum_{k,\ell \in \Delta_i} \frac{(\gcd(k,\ell))^{2 \alpha}}{(k \ell)^\alpha} \nonumber\\
& = & \# \Delta_i \prod_{r=1}^{A(i)} (1 + p_r^{-\alpha}) \nonumber\\
& \gg & i^\beta \exp \left(K_3 (\log i)^{1-\alpha} (\log \log i)^{-\alpha} \right) \label{gcdsumdelta}
\end{eqnarray}
for some positive constant $K_3$. Together with the second part of Lemma~\ref{lemmaalpha} this implies that
\begin{equation}\label{l2normsalpha}
\left\| \sum_{k \in \Gamma_i} c_k f_\alpha(n_k x) \right\|^2 \gg i^{-1} (\log i)^{-2} \exp\left((K_3-K_1) (\log i)^{1-\alpha} (\log \log i)^{-\alpha} \right).
\end{equation}
Since all coefficients $(c_k)_{k \geq 1}$ are non-negative we have
\begin{eqnarray*}
\lim_{N \to \infty} \left\| \sum_{k=1}^N c_k f_\alpha(n_k x) \right\|^2 & \geq & \lim_{M \to \infty} \sum_{i=1}^M \left\| \sum_{k \in \Delta_i} c_k f_\alpha(n_k x) \right\|^2.
\end{eqnarray*}
Combining this with~\eqref{l2normsalpha} we arrive at
\begin{eqnarray}
& & \lim_{N \to \infty} \left\| \sum_{k=1}^N c_k f_\alpha(n_k x)\right\|^2 \nonumber\\
& \gg & \lim_{M \to \infty} \sum_{i=1}^M i^{-1} (\log i)^{-2} \exp\left((K_3-K_1) (\log i)^{1-\alpha} (\log \log i)^{-\alpha} \right). \label{l2conv}
\end{eqnarray}
We can assume that $K_1$ was chosen so small that $K_1 < K_3$. Then since the right-hand side of~\eqref{l2conv} is divergent, the series $\sum_{k=1}^\infty c_k f_\alpha(n_k x)$ is divergent in $L^2$. This proves the optimality of Theorem~\ref{th2} for $L^2$ convergence (except for the exact value of the constant $K$ in the extra divergence condition).\\

To show that Theorem~\ref{th2} is also optimal with respect to almost everywhere convergence, we apply Lemma~\ref{lemmaber}. As noted before, Lemma~\ref{lemmaber} can be used for $S_i,~T_i,~\Delta_i$ as defined above. Consequently there exist \emph{independent} random variables $Y_1, Y_2, \dots$ on $((0,1),\mathcal B, {\mathbb P})$ such that
\begin{equation} \label{approx}
\left\| d_i Y_i - \sum_{k \in \Gamma_i} c_k f_\alpha (n_k x) \right\| \ll d_i \left\| Y_i - \sum_{k \in \Delta_i} f_\alpha (k x) \right\| \ll i^{-\beta/2-1/2} i^{-2} \# \Delta_i \ll i^{-5/2+\beta/2}.
\end{equation}
The proof of Lemma~\ref{lemmaber} shows that the random variables $Y_i$ are constructed as the conditional expectation of $\sum_{k \in \Delta_i} f_\alpha (n_k x)$ with respect to some appropriate $\sigma$-fields. Thus the conditional form of Jensen's inequality (see for example~\cite[Theorem~13.3]{ks}) implies that
\begin{equation} \label{lya1}
\E \left(\left|d_i Y_i \right|^{2+\delta}\right) \leq d_i^{2+\delta} \E \left(\left( \sum_{k \in \Gamma_i} f_\alpha (n_k \cdot) \right)^{2+\delta}\right).
\end{equation}
We have chosen $\delta$ in such a way that $f_\alpha \in L^{2+\delta}(0,1)$. Thus by Minkowski's inequality we have
\begin{equation*}
\left\| \sum_{k \in \Gamma_i} f_\alpha (n_k \cdot) \right\|_{2+\delta} \leq \|f_\alpha \|_{2+\delta} \underbrace{\# \Gamma_i}_{=\# \Delta_i} \ll i^{\beta},
\end{equation*}
which together with~\eqref{lya1} implies that
\begin{equation} \label{lya2}
\E \left(\left|d_i Y_i \right|^{2+\delta}\right) \ll i^{(\beta/2-1/2)(2+\delta)}.
\end{equation}
On the other hand, by~\eqref{l2normsalpha} and~\eqref{approx} we have
\begin{equation} \label{lya3}
\E \left((d_i Y_i)^2\right) \gg i^{-1} (\log i)^{-2} \exp\left(K_4 (\log i)^{1-\alpha} (\log \log i)^{-\alpha} \right),
\end{equation}
where $K_4 := K_3-K_1$ is a positive constant (again we assume that $K_1$ was chosen sufficiently small). Let
$$
B_M = \sum_{i=1}^M \E \left((d_i Y_i)^2\right), \qquad \qquad D_M = \sum_{i=1}^M \E \left(\left| d_i Y_i \right|^{2+\delta}\right),
$$
and
$$
F_M(t) = \p \left( x \in (0,1):~\sum_{i=1}^M d_i Y_i < t \sqrt{B_M} \right).
$$
By~\eqref{betasize} and~\eqref{lya2} we see that the sequence $(D_M)_{M \geq 1}$ is bounded. On the other hand, by~\eqref{lya3}, we have
\begin{equation} \label{varianceconv}
B_M \gg \exp\left(K_5 (\log M)^{1-\alpha} (\log \log M)^{-\alpha} \right)
\end{equation}
for some positive constant $K_5$, which in particular implies that $B_M \to \infty$ as $M \to \infty$. Thus, the so-called Lyapunov condition for the central limit theorem is satisfied, which implies that
$$
\sup_{t \in \R} \left| F_M(t) - \Phi(t) \right| \ll L_M \qquad \textrm{as} \qquad M \to \infty,
$$
where
$$
L_M := \frac{D_M}{B_M^{1+\delta/2}}
$$
and $\Phi$ is the standard normal distribution. (For Lyapunov's central limit theorem, see for example \S 1.1 and \S 1.2 of~\cite{petrov}.) Consequently we have
$$
\p \left( \left| \sum_{i=1}^M d_i Y_i \right| \geq  \frac{\sqrt{B_M}}{\log M} \right) \to 1 \qquad \textrm{as $M \to \infty$},
$$
which together with~\eqref{varianceconv} implies that
$$
\limsup_{M \to \infty} \left| \sum_{i=1}^M d_i Y_i \right| = \infty \qquad \textup{a.e.}
$$
Now~\eqref{approx} and the first Borel--Cantelli lemma imply that we also have
$$
\limsup_{M \to \infty} \left| \sum_{i=1}^M \sum_{k \in \Gamma_i} c_k f_\alpha(n_k x) \right| = \infty \qquad \textup{a.e.},
$$
which implies that
$$
\limsup_{N \to \infty} \left| \sum_{k=1}^N c_k f_\alpha(n_k x) \right| = \infty \qquad \textup{a.e.}
$$
This proves the optimality of Theorem~\ref{th2} for almost everywhere convergence.\\

We note that a more detailed analysis shows that a possible choice for the constant $K_1$, and accordingly also for the constant $\hat{K}(\alpha)$ in the statement of Theorem~\ref{th2}, is
$$
K_1=\left((2\alpha-1)/(2\alpha ~\log 2)\right)^{1-\alpha} (1-\alpha)^{-1} - \ve
$$
for an arbitrary $\ve>0$. Consequently the ``blowup'' of the constant in the extra convergence condition is of order $(1-\alpha)^{-1}$ as $\alpha \to 1$, both in the sufficiency condition and in the optimality result.
\end{proof}

\begin{proof}[Proof of Theorem~\ref{th3}:] By~\eqref{normest} and Lemma~\ref{lemmahil} for any real sequence $(c_k)_{k\ge 1}$ and for any $M,N$ satisfying $1 \leq M <  N$ we have
\begin{equation}\label{est}
\int_0^1 \left(\sum_{k=M}^N c_k f(kx)\right)^2 dx \ll \sum_{k, \ell=M}^N |c_k c_\ell|\frac{(\gcd (k, \ell))^{2\alpha}}{(k\ell)^\alpha}
\ll \sum_{k\le N^2} \hat{b}_k^2,
\end{equation}
where the numbers $\hat{b}_k$ are defined by
\begin{equation}\label{b5}
\hat{b}_k=\frac{1}{k^\alpha} \sum_{d|k,~d \geq M} d^\alpha |c_d|.
\end{equation}
Let $\ve>0$ be so small that $1-2\alpha+\ve <0$, and that~\eqref{sigma1} holds. For the simplicity of the formulas, in the sequel we write $\sigma (k)$ for $\sigma_{1-2\alpha+\ve} (k)$. By~\eqref{b5} and the Cauchy--Schwarz inequality we have
\begin{align*}
\hat{b}_k^2 & =\left(\sum_{d|k,~d \geq M} |c_d| (d/k)^\alpha\right)^2 \\
& =\left(\sum_{d|k,~d \geq M} |c_d|
(d/k)^{\frac{1}{2}+\frac{\ve}{2}} (k/d)^{-\alpha+\frac{1}{2}+\frac{\ve}{2}}\right)^2
\\
&\le \sum_{d|k,~d \geq M} c_d^2 (d/k)^{1+\ve} \sum_{d|k,~d \geq M}
(k/d)^{1-2\alpha+\ve} \\
& =\sum_{d|k,~d \geq M} c_d^2 (d/k)^{1+\ve} \sum_{h|k,~h \leq k/M}
h^{1-2\alpha+\ve}\\
& \leq \sum_{d|k,~d \geq M} c_d^2 (d/k)^{1+\ve} \sigma (k).
\end{align*}
Thus
\begin{align}
\sum_{k=M}^{N^2} \hat{b}_k^2 & \le \sum_{k=M}^{N^2}~ \sum_{d|k,~d \geq M} c_d^2
(d/k)^{1+\ve}\sigma(k) \nonumber\\
& \le \sum_{d=M}^{N^2}c_d^2
d^{1+\ve}\sum \sigma (k)k^{-(1+\ve)}, \label{b6}
\end{align}
where the inner sum is extended for all $k$ of the form $k=jd$,
$j=1, 2, \ldots$. But $\sigma (jd)\le \sigma (d)\sigma (j)$ and thus
the inner sum in~\eqref{b6} is bounded by
\begin{equation*}\label{b7}
\sum_{j=1}^\infty \sigma (d) \sigma (j)(dj)^{-(1+\ve)} \ll \sigma (d)d^{-(1+\ve)} \underbrace{\sum_{j=1}^\infty \sigma (j)j^{-(1+\ve)}}_{\ll 1} \ll \sigma (d)d^{-(1+\ve)},
\end{equation*}
where we used the fact that $\sigma (j)\le d(j)=O(j^\eta)$ for any $\eta>0$. Substituting this
into~\eqref{b6}, we get, together with~\eqref{est}, that
\begin{equation} \label{lastf}
\int_0^1 \left(\sum_{k=M}^N c_k
f(kx)\right)^2 dx \ll \sum_{k=M}^{N^2} c_k^2 \sigma(k).
\end{equation}
By~\eqref{sigma1} the right-hand side of~\eqref{lastf} can be made arbitrarily small if $M$ is chosen sufficiently large. Thus by the Cauchy convergence test the series~\eqref{fkx} is convergent in $L^2$.\\

To prove the second part of Theorem~\ref{th3}, let $\alpha\in (1/2, 1)$, $0<\beta<1$, and choose $\delta>0$ so small that $\beta(1+\delta)<1$.
Then by the second statement of Theorem~\ref{th1} there exist a function $f \in C_\alpha$ and a sequence $(c_k)_{k \geq 1}$ such
that
\begin{equation} \label{b8}
\sum_{k=1}^\infty c_k^2 \exp \left( \frac{\beta(1+\delta)}{1-\alpha} \frac{(\log k)^{1-\alpha}}{\log \log k} \right) < \infty
\end{equation}
but the series~\eqref{fkx} does not converge in $L^2$ norm. In view of~\eqref{grw}, the terms of the sum in~\eqref{sigma2} are smaller
than those of~\eqref{b8} for sufficiently large $k$ and thus the sum~\eqref{sigma2} converges,
proving the second half of Theorem~\ref{th3}.
\end{proof}


\begin{thebibliography}{10}

\bibitem{a1}
C.~Aistleitner.
\newblock Convergence of {$\sum c_kf(kx)$} and the {L}ip {$\alpha$} class.
\newblock {\em Proc. Amer. Math. Soc.}, 140(11):3893--3903, 2012.

\bibitem{a2}
C.~Aistleitner.
\newblock Lower bounds for the maximum of the {R}iemann zeta function along vertical lines.
\newblock Preprint. Available at \url{http://arxiv.org/abs/1409.6035}. 

\bibitem{abs}
C.~Aistleitner, I.~Berkes, and K.~Seip.
\newblock {G}{C}{D} sums from {P}oisson integrals and systems of dilated
  functions.
\newblock {\em J. Eur. Math. Soc.}, to appear. Available at \url{http://arxiv.org/abs/1210.0741}.

\bibitem{berk2}
I.~Berkes.
 \newblock On the asymptotic behaviour of {$Sf(n_{k}x)$}. {M}ain theorems.
 \newblock {\em Z. Wahrscheinlichkeitstheorie und Verw. Gebiete}, 34(4):319--345, 1976.

\bibitem{berkes}
I.~Berkes.
\newblock On the convergence of {$\sum c_nf(nx)$} and the {${\rm Lip}\ 1/2$}
  class.
\newblock {\em Trans. Amer. Math. Soc.}, 349(10):4143--4158, 1997.

\bibitem{bewe}
I.~Berkes and M.~Weber. On series of dilated functions.
\emph{Quart.\ J.\ Math.}, 65(1):25--52, 2014.

\bibitem{bewe2}
I.~Berkes and M.~Weber.
\newblock On the convergence of {$\sum c_kf(n_kx)$}.
\newblock {\em Mem. Amer. Math. Soc.}, 201(943):viii+72, 2009.


\bibitem{bewe3}
I.~Berkes and M.~Weber.
\newblock On series $\sum c_kf(kx)$ and Khinchin's conjecture.
\newblock {\em Israel J.\ Math.}, 201(2):593--609, 2014.




\bibitem{BonS}
A.~Bondarenko and K.~Seip.
\newblock GCD sums and complete sets of square-free integers.
\newblock \url{http://arxiv.org/abs/1402.0249}.

\bibitem{bo} J. Bourgain. Almost sure convergence and bounded entropy. \emph{Israel J.\ Math.} 63
(1988), 79--95.


\bibitem{bremont}
J.~Br{\'e}mont.
\newblock Davenport series and almost-sure convergence.
\newblock {\em Q. J. Math.}, 62(4):825--843, 2011.

\bibitem{carleson}
L.~Carleson.
\newblock On convergence and growth of partial sums of {F}ourier series.
\newblock {\em Acta Math.}, 116:135--157, 1966.

\bibitem{davenport}
H.~{Davenport}.
\newblock {On some infinite series involving arithmetical functions.}
\newblock {\em {Q. J. Math., Oxf. Ser.}}, 8:8--13, 1937.


\bibitem{gal}
I.~S. G{\'a}l.
\newblock A theorem concerning {D}iophantine approximations.
\newblock {\em Nieuw Arch. Wiskunde (2)}, 23:13--38, 1949.

\bibitem{gapo}
V.~F.~Gapo{\v{s}}kin
 \newblock Convergence and divergence systems.
 \newblock {\em Mat. Zametki} 4:253--260, 1968.

\bibitem{gron}
T.~H.~Gronwall.
\newblock Some asymptotic expressions in the theory of numbers.
\newblock {\em Trans.\ Amer.\ Math.\ Soc.}, 14:113--122, 1913.




\bibitem{hilber}
T.~Hilberdink.
\newblock An arithmetical mapping and applications to {$\Omega$}-results for
  the {R}iemann zeta function.
\newblock {\em Acta Arith.}, 139(4):341--367, 2009.

\bibitem{jaffard}
S.~Jaffard.
\newblock On {D}avenport expansions.
\newblock In {\em Fractal geometry and applications: a jubilee of {B}eno\^\i t
  {M}andelbrot. {P}art 1}, volume~72 of {\em Proc. Sympos. Pure Math.}, pages
  273--303. Amer. Math. Soc., Providence, RI, 2004.

\bibitem{jaffard2}
S.~Jaffard and S.~Nicolay.
\newblock Space-filling functions and {D}avenport series.
\newblock In {\em Recent developments in fractals and related fields}, Appl.
  Numer. Harmon. Anal., pages 19--34. Birkh\"auser Boston Inc., Boston, MA,
  2010.

\bibitem{khin} A.~J. Khinchin, A.J.: Ein Satz \"uber Kettenbr\"uche mit arithmetischen
Anwendungen. \emph{Math.\ Z.}, 18:289--306, 1923.

\bibitem{knopprobins}
M.~Knopp and S.~Robins.
\newblock Easy proofs of {R}iemann's functional equation for {$\zeta(s)$} and
  of {L}ipschitz summation.
\newblock {\em Proc. Amer. Math. Soc.}, 129(7):1915--1922, 2001.

\bibitem{koks}
J.~F. Koksma.
\newblock On a certain integral in the theory of uniform distribution.
\newblock {\em Nederl. Akad. Wetensch., Proc. Ser. A. {\bf 54} = Indagationes
  Math.}, 13:285--287, 1951.

\bibitem{koks53} J.~F. Koksma. 
 \newblock Estimations de fonctions \`a l'aide d'int\'egrales de Lebesgue. 
 \newblock \emph{Bull.\ Soc.\ Math.\ Belg.}, 6:4--13, 1953.

\bibitem{ks}
L.~B. Koralov and Y.~G. Sinai.
\newblock {\em Theory of probability and random processes}.
\newblock Universitext. Springer, Berlin, second edition, 2007.

\bibitem{lacey}
M.~Lacey.
 \newblock Carleson's theorem: proof, complements, variations.
 \newblock {\em Publ. Math.}, 48(2):251--307, 2004.


\bibitem{lewko}
M.~Lewko and M.~Radziwi{\l}{\l}.
\newblock Refinements of {G}\'al's theorem and applications.
\newblock Preprint. Available at \url{http://arxiv.org/abs/1408.2334}. 
 
\bibitem{linds}
P.~Lindqvist and K.~Seip.
\newblock Note on some greatest common divisor matrices.
\newblock {\em Acta Arith.}, 84(2):149--154, 1998.

\bibitem{mars} J.~M. Marstrand. On Khinchin's conjecture about strong uniform distribution.
\emph{Proc.\ London Math.\ Soc.} 21:540--556, 1970.

\bibitem{mikolas1957}
M.~Mikol{\'a}s.
\newblock Integral formulae of arithmetical characteristics relating to the
  zeta-function of {H}urwitz.
\newblock {\em Publ. Math. Debrecen}, 5:44--53, 1957.

\bibitem{mont}
H.~L.~Montgomery.
 \newblock Extreme values of the {R}iemann zeta function.
 \newblock {\em Comment. Math. Helv.}, 52(4):511--518, 1977.

\bibitem{petrov}
V.~{Petrov}.
\newblock {Classical-type limit theorems for sums of independent random
  variables.}
\newblock In {\em {Limit theorems of probability theory. Transl. from the
  Russian by B. Seckler}}, pages 1--24. Berlin: Springer, 2000.

\bibitem{sound}
K.~Soundararajan.
 \newblock Extreme values of zeta and L-functions.
 \newblock {\em Math. Ann.}, 342(2):467--486, 2008.

\bibitem{weber}
M.~J.~G. Weber.
\newblock On systems of dilated functions.
\newblock {\em C. R. Math. Acad. Sci. Paris}, 349(23-24):1261--1263, 2011.

\bibitem{wintner}
A.~Wintner.
\newblock Diophantine approximations and {H}ilbert's space.
\newblock {\em Amer. J. Math.}, 66:564--578, 1944.

\bibitem{zyg}
A.~Zygmund.
 \newblock \emph{Trigonometric series.} Vol. I, II. Third edition.
 \newblock Cambridge University Press, Cambridge, 2002.

\end{thebibliography}

\end{document}